\newtheorem{theorem}{Theorem}
\newtheorem{lemma}{Lemma}
\newtheorem{assumption}{Assumption}
\newtheorem{remark}{Remark}
\newcommand\bH{\mathbf{H}}
\begin{document}

%
% paper title
% Titles are generally capitalized except for words such as a, an, and, as,
% at, but, by, for, in, nor, of, on, or, the, to and up, which are usually
% not capitalized unless they are the first or last word of the title.
% Linebreaks \\ can be used within to get better formatting as desired.
% Do not put math or special symbols in the title.
\title{Variance-Reduced Stochastic Quasi-Newton Methods for Decentralized Learning: Part II}
\author{Jiaojiao Zhang{$^1$}, Huikang Liu{$^2$}, Anthony Man-Cho So{$^1$}, and Qing Ling{$^3$}% <-this % stops a space
	%\thanks{Qing Ling is supported in part by NSF China Grants 61573331 and 61973324, and Fundamental Research Funds for the Central Universities.}% <-this % stops a space
	\thanks{Jiaojiao Zhang and Anthony Man-Cho So are with the Department of Systems Engineering and Engineering Management, The Chinese University of Hong Kong.}%
	\thanks{Huikang Liu is with the Business School, Imperial College London.}
	\thanks{Qing Ling is with the School of Computer Science and Engineering and Guangdong Province Key Laboratory of Computational Science, Sun Yat-Sen University, as well as the Pazhou Lab.}%
}
\maketitle
\begin{abstract}
 In Part I of this work, we have proposed a general framework of decentralized stochastic quasi-Newton methods, which converge linearly to the optimal solution under the assumption that the local Hessian inverse approximations have  bounded positive eigenvalues. In Part II, we specify two fully decentralized stochastic quasi-Newton methods, damped regularized limited-memory DFP (Davidon-Fletcher-Powell) and damped limited-memory BFGS (Broyden-Fletcher-Goldfarb-Shanno), to locally construct such Hessian inverse approximations without extra sampling or communication.
 Both of the methods use a fixed moving window of $M$ past local gradient approximations and local decision variables to adaptively construct positive definite Hessian inverse approximations with bounded eigenvalues, satisfying the assumption in Part I for the linear convergence. For the proposed damped regularized limited-memory DFP, a regularization term is added to improve the performance. For the proposed damped limited-memory BFGS, a two-loop recursion is applied, leading to low storage and computation complexity. Numerical experiments demonstrate that the proposed quasi-Newton methods are much faster than the existing decentralized stochastic first-order algorithms.
\end{abstract}

% Note that keywords are not normally used for peerreview papers.
\begin{IEEEkeywords}
decentralized optimization, stochastic
quasi-Newton methods, damped limited-memory DFP, damped limited-memory BFGS
\end{IEEEkeywords}

% For peer review papers, you can put extra information on the cover
% page as needed:
% \ifCLASSOPTIONpeerreview
% \begin{center} \bfseries EDICS Category: 3-BBND \end{center}
% \fi
%
% For peerreview papers, this IEEEtran command inserts a page break and
% creates the second title. It will be ignored for other modes.
\IEEEpeerreviewmaketitle

\section{Introduction}
\label{sec:intro}
With the explosive growth of big data and the urgent need for privacy protection, decentralized learning has become attractive. In decentralized learning, local machines store large-scale data and collaboratively train models. In Part I of this work, we have considered a decentralized learning problem over an undirected and connected network with $n$ nodes, in the form of
\begin{align}\label{c1}
{x}^* =\arg\min_{x\in\mathbb{R}^{d}} ~ F(x) \triangleq  \frac{1}{n} \sum_{i=1}^{n} f_{i}(x).
\end{align}
Here, $x$ is the decision variable and $f_{i}$ is the average of $m_i$ sample costs such that $$f_i(x)\triangleq \frac{1}{m_i} \sum_{l=1}^{m_i} f_{i,l}(x),$$ where $f_{i,l}: \mathbb{R}^d\to \mathbb{R}$ is the $l$-th sample cost on node $i$ and $f_{i,l}$ is not accessible by any other nodes. The network is described by an undirected and connected graph $\mathcal{G}=(\mathcal{V}, \mathcal{E})$ with node set $\mathcal{V}=\{1,\ldots,n\}$ and edge set $\mathcal{E}\subseteq \mathcal{V}\times \mathcal{V}$.  Nodes $i$ and $j$ are neighbors and allowed to communicate with each other if and only if they are connected with an edge $(i,j)\in \mathcal{E}$. We define $\mathcal{N}_i$ as the set of neighbors of node $i$ including itself. All the nodes cooperate to find the optimal solution $x^*$ to \eqref{c1} using computation on their local costs $f_i$ and information received from their neighbors.

In Part I, we have established a general framework to solve \eqref{c1}, incorporating quasi-Newton approximations with variance reduction so as to achieve fast convergence. With initializations $\tau_i^0=x_i^0$ and  $g_i^0=v_i^0=\nabla f_i(x_i^0)$, at time $k+1$ node $i$ updates its local decision variable $x_i^{k+1}$ as
\begin{align}\label{z2}
x_i^{k+1}=&\sum_{j=1}^{n} w_{ij}x_j^k-\alpha d^k_i,\\\nonumber
v_i^{k+1}=&\frac{1}{b_i}\sum_{l\in S^{k+1}_i}\Big(\nabla f_{i,l}(x_i^{k+1})-\nabla f_{i,l}(\tau_i^{k+1})\Big)+\nabla f_{i}(\tau_i^{k+1}), \\\nonumber
g_i^{k+1}=&\sum_{j=1}^{n} w_{ij}g_j^{k}+v_i^{k+1}-v_i^k,\\\nonumber
d_i^{k+1}=&H_i^{k+1} g_i^{k+1}.
\end{align}
Here, $\alpha>0$ is the step size, $W=[w_{ij}] \in \mathbb{R}^{n \times n}$ is the mixing matrix, $S_i^{k+1} \subseteq \{1,\ldots,m_i\}$ with batch size $b_i$, while $\tau_i^{k+1}=\tau_i^{k}$ or $\tau_i^{k+1}=x_i^{k+1}$ if $\hspace{-0.5em} \mod(k+1,T)=0$. The general framework \eqref{z2} can be written in a compact form, as
\begin{align}\label{eq:alg}
\begin{aligned}
\mathbf{x}^{k+1}&=\mathbf{W}\mathbf{x}^{k}-\alpha \mathbf{d}^{k},\\
\mathbf{g}^{k+1}&=\mathbf{W}\mathbf{g}^{k}+\mathbf{v}^{k+1}-\mathbf{v}^{k},\\
\mathbf{d}^{k+1}&=\bH^{k+1}\mathbf{g}^{k+1},\\
\end{aligned}
\end{align}
where the notations can be found in Part I. 

At each time $k$, each node $i$ computes a local approximate Newton direction $d_i^k$ by the local gradient approximation $g_i^k$ and the local Hessian inverse approximation $H_i^k$. The local gradient approximation $g_i^k$ is obtained by the dynamic average consensus method  \cite{zhu2010discrete} to track the average of the variance-reduced local stochastic gradient $v_i^k$, while the local Hessian inverse approximation $H_i^k$ is constructed by quasi-Newton methods with the local decision variable $x_i^k$ and the gradient approximation $g_i^k$.
Part I proves that the proposed general framework \eqref{eq:alg} converges linearly to the optimal solution of \eqref{c1}, given that the Hessian inverse approximations $H_i^{k}$  satisfy the following assumption.
\begin{assumption}\label{asm-Hbound}
	There exist two constants $M_1$ and $M_2$ with $0< M_1\leq M_2<\infty$ such that
	\begin{equation}\label{b1}
	M_1I_d\preceq H^{k}_i\preceq M_2I_d, \; \forall \; i=1,\ldots,n,  \; k\ge 0.
	\end{equation}
\end{assumption}
In Part II of this work, we focus on on how to construct Hessian inverse approximations $H_i^k$ satisfying Assumption \ref{asm-Hbound},  with fully decentralized quasi-Newton methods.

We first review deterministic and stochastic quasi-Newton methods for solving \eqref{c1} in the centralized setting and then move on to the decentralized setting. In the centralized deterministic setting,  quasi-Newton methods usually update in the form of
\begin{equation*}
x^{k+1}=x^k-\alpha H^{k} \nabla F(x^k),
\end{equation*}
where $H^k$ is an approximation to $(\nabla^2 F(x^k))^{-1}$ and $\alpha>0$ is the step size.
The two well-known quasi-Newton methods, DFP (Davidon-Fletcher-Powell) and
BFGS (Broyden-Fletcher-Goldfarb-Shanno), update $H^k$ via
\begin{align}\label{b6}
(DFP) \quad  H^{k+1}=H^k+\frac{s^k(s^k)^T}{(s^k)^T{y}^k}-\frac{H^k{y}^k({y}^k)^TH^k}{(H^k{y}^k)^T {y}^k},
\end{align}
and
\begin{align}\label{b7}
(BFGS) \quad
H^{k+1}=&H^{k}-\frac{H^{k}y^k({s}^k)^T+{s}^k(y^k)^T H^k }{({s}^k)^Ty^k}\\\nonumber
&+\frac{{s}^k({s}^k)^T}{({s}^k)^Ty^k}\left(1+\frac{(y^k)^TH^{k}y^k}{({s}^k)^Ty^k}\right),
%=&\left(I-\frac{s^k (y^k)^T}{(s^k)^T y^k}\right)H^k \left(I-\frac{y^k (s^k)^T}{(s^k)^T y^k}\right)\\\nonumber
%&+\frac{s^k(s^k)^T}{(s^k)^T y^k}
\end{align}
respectively. Here, $s^k$ and $y^k$ are defined as
$$s^k=x^{k+1}-x^{k}, \quad y^k=\nabla F(x^{k+1})-\nabla F(x^k).$$
If the cost function $F(x)$ is strongly convex, the curvature condition $(s^k)^Ty^k>0$ holds and thus the Hessian inverse approximations $H^k$ via \eqref{b6} and \eqref{b7} preserve positive definiteness given a positive definite initialization such that $H^0 \succ 0$ \cite{nocedal2006numerical}.

When the number of samples is very large, computing the full gradient $\nabla F$ is prohibitive, which motivates the development of stochastic methods. In the centralized stochastic setting, there are many works which combine stochastic gradient descent with carefully constructed curvature information \cite{bordes2009sgd,byrd2011use,byrd2016stochastic,schraudolph2007stochastic }.  For example,  \cite{bordes2009sgd} investigates how to construct a diagonal or low-rank matrix according to the secant condition. The work of \cite{byrd2011use} incorporates sub-sampled Hessian information in a Newton conjugate gradient method and a limited-memory quasi-Newton method for statistical learning. An online limited-memory BFGS using stochastic gradients is proposed in \cite{schraudolph2007stochastic}, in lieu of the full gradient in BFGS update; the convergence analysis is given in \cite{mokhtari2015global}. The work of
\cite{mokhtari2014res} proposes a regularized stochastic BFGS (RES) method where stochastic gradients are used both as descent directions and constituents of Hessian estimates. The regularization technique ensures that the eigenvalues of the Hessian approximations are uniformly bounded. The works of \cite{lucchi2015variance} and \cite{moritz2016linearly} take advantage of variance reduction to eliminate the stochastic gradient noise, such that the resultant stochastic quasi-Newton methods are provably convergent at linear rates.

Although the deterministic and stochastic
quasi-Newton methods have been widely used in the centralized setting, they cannot be used directly in decentralized optimization. Taking the decentralized network topology into consideration, each node is only allowed to communicate with its neighbors, which leads to the lack of global gradient and Hessian information. In the decentralized deterministic setting, there are a few works exploring the decentralized quasi-Newton methods with the penalization technique \cite{eisen2017decentralized, jerinkic2019distributed}, in the dual domain \cite{eisen2016decentralized}, and in the primal-dual domain \cite{eisen2019primal}. However, to the best of our knowledge, computationally affordable decentralized stochastic second-order methods have not been investigated.

In Part II of this work, we propose two fully decentralized quasi-Newton methods to construct local Hessian inverse approximations $H_i^k$ fitting into the general framework: All $H_i^k$ satisfy Assumption \ref{asm-Hbound}, and are constructed only with the local decision variables $x_i^k$ and the local gradient approximations $g_i^k$. Note that using the gradient approximations to construct the Hessian inverse approximations is quite adventurous, since the gradient approximations are noisy due to stochastic
gradient noise and disagreement among the nodes. Naively adopting centralized quasi-Newton methods may end up with almost-singular Hessian inverse approximations, or even non-positive semidefinite ones. To tackle these issues, the proposed methods use the damping and limited-memory techniques so as to adaptively construct positive definite Hessian inverse approximations with bounded eigenvalues.

\textbf{Notations.}
%$A\succeq B$ and $A\succ B$ mean $A-B \succeq 0$ and $A-B\succ 0$, respectively.  $\lambda_{\max}(\cdot)$ and $\lambda_{\min}(\cdot)$ denote the largest and smallest eigenvalues of a matrix, respectively. The $i$-th largest eigenvalue of a matrix is denoted by $\lambda_i(\cdot)$.
% We use $\otimes$ to denote Kronecker product.  We define the
%$\mathbf{W}=W\otimes I_d\in \mathbb{R}^{nd\times nd}$. We define the aggregated variables   $\mathbf{x}=(x_1,\cdots,x_n)\in \mathbb{R}^{nd}$, $\mathbf{y}=(y_1,\cdots,y_n)\in \mathbb{R}^{nd}$, $\mathbf{v}=(v_1,\cdots,v_n)\in \mathbb{R}^{nd}$, $\mathbf{d}=(d_1,\cdots,d_n)\in \mathbb{R}^{nd}$, $\mathbf{g}=[g_1;\cdots;g_n]\in \mathbb{R}^{nd}$ and $\btau=[\tau_1;\cdots;\tau_n]\in \mathbb{R}^{nd}$. Define a block diagonal matrix $\bH^{k}=\operatorname{diag}\{H_i^{k}\}\in \mathbb{R}^{nd\times nd}$ whose $i$-th block is $H_i^{k}$.
We use $\|\cdot\|$ to denote the Euclidean norm of a vector. $\text{tr}(\cdot)$, $\|\cdot\|_F$ and $\|\cdot\|_2$ denote the trace, the Frobenius norm and the spectral norm of a matrix, respectively. $I_d\in \mathbb{R}^{d\times d}$ denotes the $d\times d$ identity, and $1_n\in \mathbb{R}^{n}$ denotes the $n$-dimensional column vector of all ones. $A\succeq 0$ and $A\succ 0$ mean that $A$ is positive semidefinite  and  positive definite, respectively.  We use  $A\succeq B$ and $A\succ B$  to denote $A-B \succeq 0$ and $A-B\succ 0$, respectively.  $\lambda_{\max}(\cdot)$, $\lambda_{\min}(\cdot)$ denote the largest and smallest eigenvalues of a matrix, respectively. The $i$-th largest eigenvalue of a matrix is denoted by $\lambda_i(\cdot)$. We use $A^{\frac{1}{2}}$ to denote the square root of a positive semidefinite matrix $A$ such that $A=A^{\frac{1}{2}}A^{\frac{1}{2}}$. Define the aggregated variable  $\mathbf{x}=[x_1;\cdots;x_n]\in \mathbb{R}^{nd}$ for $x_1, \cdots, x_n \in \mathbb{R}^d$, and similar aggregation rules apply to other variables $ \mathbf{d}, \mathbf{g}$, and $\mathbf{v}$.
Define a block diagonal matrix $\bH^{k}=\operatorname{diag}\{H_i^{k}\}\in \mathbb{R}^{nd\times nd}$ whose $i$-th block is $H_i^{k} \in \mathbb{R}^{d\times d}$.

%%%%%%%%%%%%%%%%%%%%%%%%%%%%%%%%%%%%%%%%%%%%%%%%%%%%%%%%%%%%%%%%%%%%%%%%%%%%%%%%%%%%%%%%%%%%%%%%%%%%%%%%%%
\section{Damped Regularized Limited-memory DFP for Hessian Inverse Approximation} \label{sec-dfp}
In this section, we propose a fully decentralized stochastic quasi-Newton approximation approach called the damped regularized limited-memory DFP, to locally construct the Hessian inverse approximations. Each node $i$ only uses its own gradient approximations $g_i^{k+1}$ and decision variables $x_i^{k+1}$ to construct $H_i^{k+1}$, and does not need extra communication with its neighbors. Therefore, from now on we do not specify the node index $i$ until the end of derivation. 

As we have emphasized in Section \ref{sec:intro}, constructing a reliable Hessian inverse approximation is challenging in the decentralized stochastic setting. Since the gradient approximations $g^{k+1}$ are noisy due to stochastic gradient noise and disagreement among the nodes, naively adopting centralized quasi-Newton methods may lead to almost-singular Hessian inverse approximations (i.e., either  $\lambda_{\min}(H^{k+1})\to 0$ or $\lambda_{\max}(H^{k+1})\to \infty$) or even non-positive semidefinite ones.

%\red{The biggest challenge of estimating the Hessian inverse by gradient trackers is how to preserve its positive-definiteness and boundedness.
%We first see how to avoid $\lambda_{\min}(H_i)\to 0$ with the regularization technique introduced in the centralized setting \cite{mokhtari2014res}.  Our regularized DFP method is slightly different from the regularized BFGS method in \cite{mokhtari2014res}. We use DFP method to estimate Hessian inverse, while the work \cite{mokhtari2014res} uses BFGS to estimate Hessian.}

\subsection{Damped Regularized Limited-memory DFP}

%%%%%%%%%%%%%%%%%%%%%%%%%%%%%%%%%%%%%%%%%%%%%%
%\subsection{Regularized DFP}
%\blue{Traditional DFP} is the solution to the following optimization problem:
%\begin{align}\label{1}
%H^{k+1}=\arg\min_Z& \;\text{tr}[(H^k)^{-1}Z]-\log\det[(H^k)^{-1}Z]\\\nonumber
%\text{s.t.}&\; Z{y}^k={s}^k, Z\succeq 0.
%\end{align}
%The solution of
%the semidefinite program in \eqref{1} is therefore closest to $H^k$ in the sense of minimizing the Gaussian differential entropy
%among all positive semidefinite matrices that satisfy the secant condition.
%
%If the inner product of
%the gradient and variable variations is positive, i.e., $(s^k)^Ty^k>0$.
%The solution in \eqref{1} is explicitly given by the
%update \cite{nocedal2006numerical}
%\begin{align}\label{2}
%H^{k+1}=H^k+\frac{s^k(s^k)^T}{(s^k)^T{y}^k}-\frac{H^k{y}^k({y}^k)^TH^k}{(H^k{y}^k)^T {y}^k}.
%\end{align}
%We call \eqref{2} as traditional DFP method.
It is known that the update of DFP is obtained by minimizing the Gaussian differential entropy subject to certain constraints. Inspired by \cite{mokhtari2014res,chen2019stochastic}, to avoid $\lambda_{\min}(H^{k+1})\to 0$,  we add a regularization term with parameter $\rho>0$ to the minimization problem, given by
\begin{align}\label{3}
H^{k+1}=\arg\min_{Z\in \mathbb{R}^{d \times d}}& \;\text{tr}[(H^k)^{-1}(Z-\rho I_d)] \\ \nonumber
&-\log\det[(H^k)^{-1}(Z-\rho I_d)], \\ \nonumber
\text{s.t.}&\; Z{y}^k=s^k,~ Z\succeq 0,
\end{align}
where $s^k =  x^{k+1} - x^k$ is variable variation and $y^k = g^{k+1} - g^k$ is gradient approximation variation. If we let $\rho=0$, then \eqref{3} reduces to the traditional DFP.

Define a modified variable variation $\hat{s}^k$ as
\begin{align}\label{4}
\hat{s}^k=s^k-\rho y^k,
\end{align}
The work \cite{mokhtari2014res} has proved that the closed-form solution to \eqref{3} is given by
\begin{align}\label{5}
H^{k+1}=H^k+\frac{\hat{s}^k(\hat{s}^k)^T}{(\hat{s}^k)^Ty^k}-\frac{H^ky^k(y^k)^TH^k}{(y^k)^T H^k y^k}+\rho I_d.
\end{align}
%In \cite{mokhtari2014res}, the variation $s^k=x^{k+1}-x^k$ and the variation $y^k=\nabla f_{\mathcal{S}^k}(x^{k+1})-\nabla f_{\mathcal{S}^k}(x^{k})$, which is the difference of two stochastic gradients of the same function sampled by $\mathcal{S}^k$  at different arguments $x^{k+1}$ and $x^k$. \cite{mokhtari2014res} proves some good results of the update \eqref{5}.
In the centralized stochastic setting where $g^k$ is the stochastic gradient instead of the stochastic gradient approximation, \cite{mokhtari2014res} shows that if $\rho$ is properly chosen and $H^0 \succ 0$, then $(\hat{s}^k)^Ty^k>0$ and thus $\lambda_{\min}(H^{k+1})>\rho$  for all $k$. In addition, \cite{mokhtari2014res} also establishes the upper bound on the eigenvalues of $H^{k+1}$ for all $k$. Unfortunately, these satisfactory results no longer hold in the decentralized stochastic setting, since $y^k = g^{k+1} - g^k$ suffers from both stochastic gradient noise and disagreement among the nodes. To address this issue, we further propose the damped regularized limited-memory DFP as follows.

%%%%%%%%%%%%%%%%%%%%%%%%%%%%%%%%%%%%%%%%%%%%
%\subsection{Damped Regularized Limited-memory DFP}
To preserve positive semidefiniteness and lower boundedness of $H^k$, we combine a damping technique with the regularized DFP in \eqref{5}. To be specific, given a suitable $H^0\succ 0$, we define $\hat{y}^k$ as
\begin{equation}\label{6}
\hat{y}^k=\theta^k y^k+(1-\theta^k)(H^0+\epsilon I_d)^{-1} \hat{s}^k,
\end{equation}
where $\epsilon>0$ is a constant and $\theta^k$ is adaptively computed by
\begin{align}\label{ineq:thetak}
	\theta^k=\min\left\{\tilde{\theta}^k,\frac{ \tilde{L} \|\hat{s}^k\|}{\|y^k\|}\right\},
\end{align}
with a parameter $\tilde{L}>0$ and
the widely used parameter $\tilde{\theta}^k$ defined as \cite{chen2019stochastic}
\begin{align}\label{7}
\hspace{-1.1em}{\tilde{\theta}}^k=\left\{\begin{array}{l}
\frac{0.75 (\hat{s}^k)^{T}\left(H^{0}+\epsilon I_d\right)^{-1} \hat{s}^k}{(\hat{s}^k)^{T}\left(H^{0}+\epsilon I_d\right)^{-1} \hat{s}^k-(\hat{s}^k)^{T}y^k}, \\
\hspace{1em} \text { if } (\hat{s}^k)^{T}y^k \leq 0.25 (\hat{s}^k)^{T}\left(H^{0}+\epsilon I_d\right)^{-1} \hat{s}^k,\\
1, \text { otherwise.}
\end{array}\right.
\end{align}
As we will show later, with the added term $\frac{\tilde{L}\|\hat{s}^k\|}{\|y^k\|}$ in \eqref{ineq:thetak},  $\|\hat{y}^k\|$ can be upper bounded in terms of $\|\hat{s}^k\|$. 
Then, we replace $y^k$ with $\hat{y}^k$ in \eqref{5} to obtain the damped regularized DFP update
\begin{align}\label{a11}
H^{k+1}=H^k+\frac{\hat{s}^k(\hat{s}^k)^T}{(\hat{s}^k)^T\hat{y}^k}-\frac{H^{k}\hat{y}^k(\hat{y}^k)^TH^{k}}{(\hat{y}^k)^TH^{k}\hat{y}^k}+\rho I_d.
\end{align}	
%Compared with the traditional DFP, damped regularized DFP in \eqref{a11} introduces $\hat{s}^k$ and the addition of the regularization term $\rho I$ because of regularization,  and introduces $\hat{y}^k$ because of damped technique.
As we will prove in Lemma \ref{lem-dfp-pd}, the damping technique guarantees that $(\hat{s}^k)^T\hat{y}^k>0$, such that $\lambda_{\min}(H^{k+1})>\rho$.

On the other hand, we also have to guarantee $\lambda_{\max}(H^{k+1})$ $< \infty$. This is nontrivial since $\hat{y}$ is noisy and the regularization term $\rho I_d$ may accumulate when the algorithm evolves.  Inspired by \cite{wang2017stochastic,chen2019stochastic}, we use the limited-memory technique to tackle this issue. Usually, the limited-memory technique is combined with BFGS to reduce the memory and computation costs. In contrast, here we combine it with DFP to make the eigenvalues of $H^k$ bounded.

From now on, we specify the node index $i$. For time $k$, set an initial Hessian inverse approximation
\begin{align}\label{a12}
H_i^{k,(0)}=\min\left\{\max\left\{\frac{(s_i^k)^Ts_i^k}{(s_i^k)^Ty_i^k}+{\rho},{\beta}\right\},\mathcal{B}\right\}I_d,
\end{align}
where $\beta>0$ and $\mathcal{B}>0$ are two parameters to guarantee that $\beta I_d \preceq H_i^{k,(0)}\preceq\mathcal{B}I_d$ in initialization.
Given the two sequences $\{\hat{s}_i^p\}$ and $\{\hat{y}_i^p\}$, $p=k+1-\tilde{M}, \ldots, k$, where $\tilde{M}=\min\{k+1,M\}$ and $M$ is the memory size, the damped regularized limited-memory DFP at node $i$ updates as
\begin{align}\label{a13}
H_i^{k,(t+1)}=&H_i^{k,(t)}+\frac{\hat{s}_i^p(\hat{s}_i^p)^T}{(\hat{s}_i^p)^T\hat{y}_i^p} \\
              &-\frac{H_i^{k,(t)}\hat{y}_i^p(\hat{y}_i^p)^TH_i^{k,(t)}}{(\hat{y}_i^p)^TH_i^{k,(t)}\hat{y}_i^p}+\rho I_d, \notag
\end{align}
where $p=k+1-\tilde{M}+t$ and $t=0,\ldots,\tilde{M}-1$. At the end of this inner loop, we set $H_i^{k+1}=H_i^{k,(\tilde{M})}$.

The damped regularized limited-memory DFP for Hessian inverse approximation at node $i$ for time $k$ is summarized in Algorithm \ref{alg-DFP}.
\floatname{algorithm}{Algorithm}
\begin{algorithm}[htbp]
	\caption{Damped regularized limited-memory DFP for Hessian inverse approximation at node $i$ for time $k$} \label{alg-DFP}
	\begin{algorithmic}[1]
		\Require  $\rho$;  $\beta$; $\mathcal{B}$; $\epsilon$; $\tilde{L}$; $M$.
		\State Update variable variation
		$s_i^k=x_i^{k+1}-x_i^{k}.$
		\State Update gradient variation
		$y_i^k=g_i^{k+1}-g_i^{k}.$
		\State Update modified variable variation
		$\hat{s}_i^k=s_i^k-\rho y_i^k.$	
		\State Update modified gradient variation $\hat{y}_i^k$ as in \eqref{6}.
		%$${\hat{y}_i^k}=\theta_i^ky_i^k+(1-\theta_i^k) (H_i^{k,(0)}+{\epsilon} I)^{-1}\hat{s}_i^{k},$$ with ${\theta}_{i}^k$ in \eqref{7}.
		%		\begin{align*}
		%		{{\theta}_{i}^k}=\left\{\begin{array}{l}
		%		\frac{0.75 (\hat{s}_i^k)^{T}\left(H_i^{k,(0)}+\epsilon I\right)^{-1} \hat{s}_i^k}{(\hat{s}_i^k)^{T}\left(H_i^{k,(0)}+\epsilon I\right)^{-1} \hat{s}_i^k-(\hat{s}_i^k)^{T}y_i^k}, \\
		%		\text { if } (\hat{s}_i^k)^{T}y_i^k \leq 0.25 (\hat{s}_i^k)^{T}\left(H_i^{k,(0)}+\epsilon I\right)^{-1} \hat{s}_i^k.\\
		%		1, \text { otherwise.}
		%		\end{array}\right.
		%		\end{align*}
		\State Set $\tilde{M}=\min\{k+1,M\}$ and load $\{\hat{s}_i^p,\hat{y}_i^p\}_{p=k+1-\tilde{M}}^{k}$.
		\State Initialize $H_i^{k,(0)}$ as in \eqref{a12}.	%$H_i^{k,(0)}=\min\left\{\max\left\{\frac{(s_i^k)^Ts_i^k}{(s_i^k)^Ty_i^k}+{\rho},{\beta}\right\},\mathcal{B}\right\}I$.
		\For {{$t=0,\ldots,\tilde{M}-1$}}
		\State Update $H_i^{k,(t+1)}$ as in \eqref{a13}.
		%		\begin{align*}
		%		H_i^{k,(t+1)}=&H_i^{k,(t)}+\frac{\hat{s}_i^j(\hat{s}_i^j)^T}{(\hat{s}_i^j)^T\hat{y}_i^j+\gamma\|\hat{y}_i^j\|^{\frac{3}{2}}}\\
		%		&-\frac{H_i^{k,(t)}\hat{y}_i^j(\hat{y}_i^j)^TH_i^{k,(t)}}{(\hat{y}_i^j)^TH_i^{k,(t)}\hat{y}_i^j+\gamma\|\hat{y}_i^j\|^{\frac{3}{2}}}+\rho I.
		%		\end{align*}		
		\EndFor
		\State Output $H_i^{k+1}=H_i^{k,(\tilde{M})}$.
	\end{algorithmic}
\end{algorithm}

\subsection{Bounded Positive Eigenvalues of $H_i^k$ Constructed by DFP}
In this section, we will prove that the Hessian inverse approximations  constructed  by the proposed damped regularized limited-memory DFP are positive definite and have bounded eigenvalues.

Consider the update \eqref{a13}. The following Lemma shows the damping technique guarantees that $(\hat{s}_i^p)^T\hat{y}_i^p>0$, which yields $\lambda_{\min}(H_i^{k})>\rho$.
\begin{lemma}\label{lem-dfp-pd}
Considering the update in \eqref{a13}, with the corrected $\hat{y}^p$ by the damping technique, we have
\begin{equation*}
\text{$0<\theta_i^p\le 1$ and
$(\hat{s}_i^p)^T\hat{y}_i^p\ge 0.25 (\hat{s}_i^p)^T(H_i^{k,(0)}+\epsilon I)^{-1}\hat{s}_i^p$.}
\end{equation*}	
Moreover, with the initialization $H_i^{k,(0)}$ defined in \eqref{a12},  $H_i^{k+1}$ generated by the damped regularized limited-memory DFP in Algorithm \ref{alg-DFP} on node $i$ keeps positive definite, such that $\lambda_{\min}(H_i^{k+1})>\rho$.
\end{lemma}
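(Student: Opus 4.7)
The plan is to establish the three assertions of the lemma in the order they are stated: the bound $0<\theta_i^p\le 1$ first, then the curvature inequality, and finally the positive definiteness of $H_i^{k+1}$. Throughout I abbreviate $A:=(\hat{s}_i^p)^T(H_i^{k,(0)}+\epsilon I_d)^{-1}\hat{s}_i^p>0$ and $B:=(\hat{s}_i^p)^T y_i^p$, so that the case split in \eqref{7} reads as ``$B\le 0.25A$'' versus ``$B>0.25A$''.

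For the bound on $\theta_i^p$, I would read off the definition \eqref{7}: in the first branch the denominator equals $A-B\ge 0.75A>0$ and the numerator is $0.75A$, so $\tilde\theta_i^p\in(0,1]$; in the second branch $\tilde\theta_i^p=1$. Since $\tilde L\|\hat{s}_i^p\|/\|y_i^p\|>0$, the minimum in \eqref{ineq:thetak} stays in $(0,1]$. For the curvature inequality, substitute \eqref{6} into the inner product to obtain
\begin{equation*}
(\hat{s}_i^p)^T\hat{y}_i^p=\theta_i^p B+(1-\theta_i^p)A,
\end{equation*}
which rearranges to the equivalent statement $\theta_i^p(A-B)\le 0.75 A$. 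The case $A\le B$ is immediate because the left hand side is nonpositive and the right hand side is positive; in the case $A>B$, one checks separately on each branch of \eqref{7} that $\theta_i^p\le \tilde\theta_i^p\le 0.75A/(A-B)$ (using that when $B>0.25A$ one has $\tilde\theta_i^p=1<0.75A/(A-B)$, and when $B\le 0.25A$ one has $\tilde\theta_i^p=0.75A/(A-B)$ directly). This closes the first two assertions.

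For the positive-definiteness claim, I would induct on the inner-loop index $t=0,\ldots,\tilde M-1$. The base $H_i^{k,(0)}\succeq \beta I_d\succ 0$ is immediate from \eqref{a12}. The induction step relies on the classical fact that the unregularized DFP update applied to any $H\succ 0$ with $\hat{s}^T\hat{y}>0$ yields a positive semidefinite matrix: for any $v$, Cauchy--Schwarz in the $H$-inner product gives $(v^TH\hat{y})^2\le (v^THv)(\hat{y}^TH\hat{y})$, so the bracketed quantity in \eqref{a13} is nonnegative. Combining this with the strict positivity of $\rho I_d$, one obtains $H_i^{k,(t+1)}\succ \rho I_d$, and the curvature lower bound from the previous step guarantees $(\hat{s}_i^p)^T\hat{y}_i^p>0$ so that the hypothesis of the classical result is met at every $t$. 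Setting $t=\tilde M-1$ yields $\lambda_{\min}(H_i^{k+1})>\rho$.

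The main subtlety I expect is justifying the curvature lower bound in the presence of the additional cap $\tilde L\|\hat{s}_i^p\|/\|y_i^p\|$ in \eqref{ineq:thetak}. This cap is new relative to the standard damping recipe of \cite{chen2019stochastic}, and without it the identity $(\hat{s}_i^p)^T\hat{y}_i^p=0.25A$ on the active branch of $\tilde\theta_i^p$ would be a one-line calculation. With the cap, $\theta_i^p$ may be strictly smaller than $\tilde\theta_i^p$, and one has to verify by the case analysis above that this shrinkage only increases the weight $1-\theta_i^p$ on the safe term $(H_i^{k,(0)}+\epsilon I_d)^{-1}\hat{s}_i^p$; this monotonicity needs to be argued explicitly rather than inherited from prior work.
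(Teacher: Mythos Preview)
Your proof is correct and mirrors the paper's argument: both establish $0<\tilde\theta_i^p\le 1$ via the case split in \eqref{7}, verify the curvature lower bound by substituting \eqref{6}, and then show positive definiteness of the unregularized DFP update (the paper via the projection form $H^{1/2}(I_d-uu^T/(u^Tu))H^{1/2}$ with $u=H^{1/2}\hat y$, you via the equivalent Cauchy--Schwarz inequality in the $H$-inner product). Your handling of the extra cap $\tilde L\|\hat s_i^p\|/\|y_i^p\|$ in \eqref{ineq:thetak} is in fact more careful than the paper's, which writes a piecewise \emph{equality} for $(\hat s^p)^T\hat y^p$ as though $\theta^p=\tilde\theta^p$ always held; your monotonicity observation (shrinking $\theta_i^p$ only increases the weight on the safe term) is exactly what is needed to close that gap.
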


\begin{proof}
	Since the analysis holds for any node, we omit the node index $i$ in the proof.
	With $\tilde{\theta}^p$ defined in \eqref{7}, if $(\hat{s}^p)^{T}y^p > 0.25 (\hat{s}^p)^{T}\left(H^{k,(0)}+\epsilon I_d\right)^{-1} \hat{s}^p$, we have $\tilde{\theta}^p=1$. If $(\hat{s}^p)^{T}y^p \le 0.25 (\hat{s}^p)^{T}\left(H^{k,(0)}+\epsilon I_d\right)^{-1} \hat{s}^p$, substituting this inequality into the definition of $\tilde{\theta}^p$, we have
	\begin{align*}
	&\tilde{\theta}^p=\frac{0.75 (\hat{s}^p)^{T}\left(H^{k,(0)}+\epsilon I_d\right)^{-1} \hat{s}^p}{(\hat{s}^p)^{T}\left(H^{k,(0)}+\epsilon I_d\right)^{-1} \hat{s}^p-(\hat{s}^p)^{T}y^p}\\
	\le& \frac{0.75 (\hat{s}^p)^{T}\left(H^{k,(0)}+\epsilon I_d\right)^{-1} \hat{s}^p}{(\hat{s}^p)^{T}\left(H^{k,(0)}+\epsilon I_d\right)^{-1} \hat{s}^p-[0.25 (\hat{s}^p)^{T}\left(H^{k,(0)}+\epsilon I_d\right)^{-1} \hat{s}^p]}\\
	=&1.
	\end{align*}
Obviously, with $H^{k,(0)}\succ 0$, we have $\tilde{\theta}^p> 0$. Since $
	\theta^p=\min\left\{\tilde{\theta}^p,\frac{\tilde{L}\|\hat{s}^p\|}{\|y^p\|}\right\}$, with $0<\tilde{\theta}^p\le 1$, we have $0<\theta^p\le 1$.

Moreover, substituting the definitions of $\hat{y}^p$ in \eqref{6} and $\theta^p$ in \eqref{ineq:thetak}, we compute
	\begin{align}
	&(\hat{s}^p)^T\hat{y}^p\\\nonumber
	=&(\hat{s}^p)^T\left(\theta^p y^p+(1-\theta^p)(H^{k,(0)}+\epsilon I)^{-1} \hat{s}^p\right)\\\nonumber
	=& \theta^p\left[(\hat{s}^p)^Ty^p-(\hat{s}^p)^T(H^{k,(0)}+\epsilon I)^{-1}\hat{s}^p\right]\\\nonumber
	&+(\hat{s}^p)^T(H^{k,(0)}+\epsilon I)^{-1}\hat{s}^p\\\nonumber
=&\left\{\begin{array}{l}
{0.25 (\hat{s}^p)^{T}\left(H^{k,(0)}+\epsilon I_d\right)^{-1} \hat{s}^p}, \\
\hspace{1em} \text { if } (\hat{s}^p)^{T}y^p \leq 0.25 (\hat{s}^p)^{T}\left(H^{k,(0)}+\epsilon I_d\right)^{-1} \hat{s}^p,\\
(\hat{s}^p)^T y^p, \text { otherwise,}
\end{array}\right.
\end{align}
which implies
\begin{equation*}
(\hat{s}^p)^T\hat{y}^p\ge 0.25 (\hat{s}^p)^{T}\left(H^{k,(0)}+\epsilon I_d\right)^{-1} \hat{s}^p.
\end{equation*}
By the initialization $H^{k,(0)}$ in \eqref{a12}, we have $(\hat{s}^p)^T\hat{y}^p>0$.

To show that $H^{k+1}$ is positive definite, we rewrite \eqref{a13} as
\begin{align*}
H^{k,(t+1)}=\frac{\hat{s} \hat{s}^{T}}{\hat{s}^{T} \hat{y}}+H^{\frac{1}{2}}\left(I_d-\frac{H^{\frac{1}{2}} \hat{y} (H^{\frac{1}{2}}\hat{y})^T}{(H^{\frac{1}{2}}\hat{y})^T H^{\frac{1}{2}} \hat{y}}\right) H^{\frac{1}{2}}+\rho I_d,
\end{align*}
where  we omit all the time indexes at the right-hand side of \eqref{a13} for simplicity. Since    $0\preceq\frac{H^{\frac{1}{2}} \hat{y} (H^{\frac{1}{2}}\hat{y})^T}{(H^{\frac{1}{2}}\hat{y})^T H^{\frac{1}{2}} \hat{y}}\preceq I_d$, with $\hat{s}^T\hat{y}>0$, we conclude that $H^{k,(t+1)}$ is positive definite and $\lambda_{\min}(H^{k,(t+1)})>\rho$, which completes the proof.
\end{proof}

Based on Lemma \ref{lem-dfp-pd}, the following theorem further gives the specific lower and upper bounds for the eigenvalues of $H_i^k$ generated by Algorithm \ref{alg-DFP}.
\begin{theorem}\label{thm-dfp}
Consider the damped regularized limited-memory DFP in Algorithm \ref{alg-DFP}. We have
\begin{equation*}
M_1 I_d \preceq H_i^k \preceq M_2 I_d,\forall i,
\end{equation*}
where $M_1=\rho+(1+\omega)^{-2M}\left(\frac{1}{\beta}+\frac{1}{4(\mathcal{B}+\epsilon)}\right)^{-1}$,  $M_2=\mathcal{B}+M(4\mathcal{B}+4\epsilon+\rho) $ and  $\omega=4(\mathcal{B}+\epsilon)\left(\tilde{L}+\frac{1}{\beta+\epsilon}\right)$.
\end{theorem}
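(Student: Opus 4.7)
The plan is to analyze the inner loop \eqref{a13} for $t=0,\ldots,\tilde{M}-1$ and establish the upper and lower eigenvalue bounds separately. Two preliminary estimates will feed both directions. First, by Lemma~\ref{lem-dfp-pd} together with $H_i^{k,(0)}\preceq \mathcal{B} I_d$,
\[(\hat{s}_i^p)^T\hat{y}_i^p \;\ge\; \tfrac{1}{4}(\hat{s}_i^p)^T(H_i^{k,(0)}+\epsilon I_d)^{-1}\hat{s}_i^p \;\ge\; \frac{\|\hat{s}_i^p\|^2}{4(\mathcal{B}+\epsilon)}.\]
Second, using $\theta_i^p\|y_i^p\|\le \tilde{L}\|\hat{s}_i^p\|$ (from the clipping in \eqref{ineq:thetak}) and $\|(H_i^{k,(0)}+\epsilon I_d)^{-1}\|_2\le 1/(\beta+\epsilon)$ inside the definition \eqref{6} gives
\[\|\hat{y}_i^p\| \;\le\; \Big(\tilde{L}+\tfrac{1}{\beta+\epsilon}\Big)\|\hat{s}_i^p\|.\]
Combined, these produce the ratio bounds $\|\hat{s}_i^p\|\|\hat{y}_i^p\|/(\hat{s}_i^p)^T\hat{y}_i^p\le \omega$ and $\|\hat{y}_i^p\|^2/(\hat{s}_i^p)^T\hat{y}_i^p\le \omega^2/(4(\mathcal{B}+\epsilon))$, which are the only facts needed about $\hat{s}_i^p,\hat{y}_i^p$.

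For the upper bound I would observe that in \eqref{a13} the subtracted term $H_i^{k,(t)}\hat{y}_i^p(\hat{y}_i^p)^T H_i^{k,(t)}/((\hat{y}_i^p)^T H_i^{k,(t)}\hat{y}_i^p)$ is positive semidefinite and may be dropped in a spectral upper estimate, while $\hat{s}_i^p(\hat{s}_i^p)^T/(\hat{s}_i^p)^T\hat{y}_i^p$ is a rank-one PSD matrix with spectral norm at most $4(\mathcal{B}+\epsilon)$. Weyl's inequality then yields
\[\lambda_{\max}(H_i^{k,(t+1)}) \;\le\; \lambda_{\max}(H_i^{k,(t)})+4(\mathcal{B}+\epsilon)+\rho,\]
and telescoping over $\tilde{M}\le M$ steps starting from $\lambda_{\max}(H_i^{k,(0)})\le \mathcal{B}$ produces exactly $M_2=\mathcal{B}+M(4\mathcal{B}+4\epsilon+\rho)$.

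For the lower bound I would pass to the inverse to convert the DFP update into its dual BFGS form. Introduce the shift $\tilde{H}_i^{k,(t+1)}:=H_i^{k,(t+1)}-\rho I_d$; by the structure of \eqref{a13}, $\tilde{H}_i^{k,(t+1)}$ is exactly the standard DFP update applied to $H_i^{k,(t)}$, so by the classical DFP--BFGS inversion identity, with $V_i^p:=I_d-\hat{s}_i^p(\hat{y}_i^p)^T/((\hat{s}_i^p)^T\hat{y}_i^p)$,
\[(\tilde{H}_i^{k,(t+1)})^{-1} \;=\; (V_i^p)^T (H_i^{k,(t)})^{-1} V_i^p+\frac{\hat{y}_i^p(\hat{y}_i^p)^T}{(\hat{s}_i^p)^T\hat{y}_i^p}.\]
Now $\|V_i^p\|_2\le 1+\omega$, the additive rank-one term has spectral norm at most $\omega^2/(4(\mathcal{B}+\epsilon))$, and the PSD monotonicity $\|(H_i^{k,(t)})^{-1}\|_2\le\|(\tilde{H}_i^{k,(t)})^{-1}\|_2$ holds for $t\ge 1$ because $H_i^{k,(t)}\succeq\tilde{H}_i^{k,(t)}$. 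Setting $a_t:=\|(\tilde{H}_i^{k,(t)})^{-1}\|_2$ with $a_0\le 1/\beta$, I obtain the single scalar recursion $a_{t+1}\le (1+\omega)^2 a_t+\omega^2/(4(\mathcal{B}+\epsilon))$. Unrolling and simplifying the geometric sum via the elementary identity $\omega(r^{\tilde{M}}-1)\le(\omega+2)r^{\tilde{M}}$ with $r=(1+\omega)^2$ gives $a_{\tilde{M}}\le (1+\omega)^{2M}\big(\tfrac{1}{\beta}+\tfrac{1}{4(\mathcal{B}+\epsilon)}\big)$, whence $\lambda_{\min}(H_i^{k+1})\ge M_1$ after inverting and adding $\rho$.

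The main technical obstacle lies in the lower bound. The delicate moves are (i) the substitution $\tilde{H}_i^{k,(t+1)}=H_i^{k,(t+1)}-\rho I_d$ that strips off the regularization so that inversion produces a clean BFGS-type recursion, (ii) invoking PSD monotonicity to close the recursion on a single scalar quantity (thereby sidestepping the mismatch that the inversion formula involves $H_i^{k,(t)}$ rather than $\tilde{H}_i^{k,(t)}$), and (iii) verifying that the geometric sum after unrolling collapses exactly to the concise factor $\tfrac{1}{\beta}+\tfrac{1}{4(\mathcal{B}+\epsilon)}$ announced in the theorem. The upper bound, by contrast, follows essentially from the PSD decomposition of \eqref{a13} and is routine.
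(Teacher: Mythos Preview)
Your proposal is correct and follows essentially the same route as the paper: the same two preliminary ratio bounds feeding into (i) a telescoping upper estimate after dropping the PSD subtracted term, and (ii) the Sherman--Morrison--Woodbury inversion of the $\rho$-shifted update followed by the scalar recursion $a_{t+1}\le(1+\omega)^2a_t+\omega^2/(4(\mathcal{B}+\epsilon))$. One small notational wrinkle: since you define $a_t=\|(\tilde{H}_i^{k,(t)})^{-1}\|_2$, the claim $a_0\le 1/\beta$ is not literally true (only $a_0\le 1/(\beta-\rho)$ follows from $\tilde{H}_i^{k,(0)}=H_i^{k,(0)}-\rho I_d\succeq(\beta-\rho)I_d$); the clean fix, which the paper and your own ``$t\ge 1$'' caveat both implicitly use, is to start the recursion from $\|(H_i^{k,(0)})^{-1}\|_2\le 1/\beta$ at the base step and invoke the monotonicity $\|(H_i^{k,(t)})^{-1}\|_2\le a_t$ only for $t\ge 1$.
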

\begin{proof}
Since the analysis holds for any node, we omit the node index $i$ in the proof.
First, we establish the upper bound.
According to \eqref{a13}, we know that $H^{k,(t+1)}\leq H^{k,(t)}+\frac{\hat{s}^p(\hat{s}^p)^T}{(\hat{s}^p)^T\hat{y}^p}+\rho I_d$, which implies
\begin{equation}\label{ineq:hkt1}
\begin{split}
	\|H^{k,(t+1)}\|_2&\leq \|H^{k,(t)}\|_2+\left\|\frac{\hat{s}^p(\hat{s}^p)^T}{(\hat{s}^p)^T\hat{y}^p}\right\|_2+\rho\\
	&\le \|H^{k,(t)}\|_2+\frac{\|\hat{s}^p\|^2}{(\hat{s}^p)^T\hat{y}^p}+\rho,
\end{split}
\end{equation}
where we use $(\hat{s}^p)^T\hat{y}^p>0$ in the last inequality.
Then, with Lemma \ref{lem-dfp-pd}, we have
\begin{align}\label{ineq:syp}
	(\hat{s}^p)^T\hat{y}^p\geq& 0.25 (\hat{s}^p)^{T}\left(H^{k,(0)}+\epsilon I_d\right)^{-1} \hat{s}^p\\
	\geq& \frac{0.25\|\hat{s}^p\|^2}{\mathcal{B}+\epsilon}\nonumber,
\end{align}
where the last inequality holds since $H^{0}\preceq\mathcal{B}I_d$. Substituting \eqref{ineq:syp} into \eqref{ineq:hkt1}, we get
\begin{equation}\label{ineq:hk-recur}
	\|H^{k,(t+1)}\|_2\leq \|H^{k,(t)}\|_2+4(\mathcal{B}+\epsilon)+\rho.
\end{equation}
 Following the standard argument for recurrence, from \eqref{ineq:hk-recur} we get
\begin{equation*}
	\begin{split}
		\|H^{k+1}\|_2=\|H^{k,(\tilde{M})}\|_2&\leq \|H^{k,(0)}\|_2+\tilde{M}(4\mathcal{B}+4\epsilon+\rho)\\
		&\leq \mathcal{B}+M(4\mathcal{B}+4\epsilon+\rho).
	\end{split}
\end{equation*}
Thus, we get the upper bound $M_2=\mathcal{B}+M(4\mathcal{B}+4\epsilon+\rho)$.

Next, we establish the lower bound. According to \eqref{6} and \eqref{ineq:thetak}, with the added term $\frac{\tilde{L}\|\hat{s}^p\|}{\|y^p\|}$ in \eqref{ineq:thetak}, $\|\hat{y}^p\|$ can be upper bounded in terms of $\|\hat{s}^p\|$ such that \begin{equation}\label{ineq:hatyp}
\begin{split}
\|\hat{y}^p\|&\leq\theta^p\|y^p\|+(1-\theta^p)\|(H^{k,(0)}+\epsilon I_d)^{-1} \hat{s}^p\|\\
&\leq \tilde{L}\|\hat{s}^p\|+\frac{1}{\beta+\epsilon}\|\hat{s}^p\|,
\end{split}
\end{equation}
where we use $0<\theta^p\le \frac{\tilde{L}\|\hat{s}^p\|}{\|y^p\|}$ in the second inequality.
Using Sherman-Morrison-Woodbury formula \cite{nocedal2006numerical} on \eqref{a13}, we get
\begin{align}\label{w20}
	&\left(H^{k,(t+1)}-\rho I_d\right)^{-1}\\\nonumber
=&\left(I_d-\frac{\hat{y}^p(\hat{s}^p)^T}{(\hat{s}^p)^T\hat{y}^p}\right)\left(H^{k,(t)}\right)^{-1}\left(I_d-\frac{\hat{s}^p(\hat{y}^p)^T}{(\hat{s}^p)^T\hat{y}^p}\right)+\frac{\hat{y}^p(\hat{y}^p)^T}{(\hat{s}^p)^T\hat{y}^p}.
\end{align}

With \eqref{ineq:syp} and \eqref{ineq:hatyp}, we bound the two terms at the right-hand side of \eqref{w20} as follows. The first term at the right-hand side of \eqref{w20}  satisfies
\begin{equation}\label{ineq:ysp-norm}
	\left\|\frac{\hat{y}^p(\hat{s}^p)^T}{(\hat{s}^p)^T\hat{y}^p}\right\|_2\leq \frac{\|\hat{y}^p\|\cdot\|\hat{s}^p\|}{(\hat{s}^p)^T\hat{y}^p}\leq 4(\mathcal{B}+\epsilon)\left(\tilde{L}+\frac{1}{\beta+\epsilon}\right),
\end{equation}
where we substitute \eqref{ineq:syp} and \eqref{ineq:hatyp} in the last inequality. For the second term at the right-hand side of \eqref{w20}, we have
\begin{equation}\label{ineq:y2}
\left\|\frac{\hat{y}^p(\hat{y}^p)^T}{(\hat{s}^p)^T\hat{y}^p}\right\|_2\leq \frac{\|\hat{y}^p\|^2}{(\hat{s}^p)^T\hat{y}^p}\leq 4(\mathcal{B}+\epsilon)\left(\tilde{L}+\frac{1}{\beta+\epsilon}\right)^2,
\end{equation}
where we also substitute \eqref{ineq:syp} and \eqref{ineq:hatyp} in the last inequality. For simplicity, let $\omega\triangleq 4(\mathcal{B}+\epsilon)\left(\tilde{L}+\frac{1}{\beta+\epsilon}\right)$, then taking norm on both sides of \eqref{w20}, we have
\begin{align}\label{ineq:hkinv}
		&\left\|\left(H^{k,(t+1)}-\rho I_d\right)^{-1}\right\|_2\\\nonumber
		\le& \left(1+\left\|\frac{\hat{y}^p(\hat{s}^p)^T}{(\hat{s}^p)^T\hat{y}^p}\right\|_2\right)^2 \cdot \left\|\left(H^{k,(t)}\right)^{-1}\right\|_2+\left\|\frac{\hat{y}^p(\hat{y}^p)^T}{(\hat{s}^p)^T\hat{y}^p}\right\|_2\\\nonumber
		\leq&(1+\omega)^2\left\|\left(H^{k,(t)}\right)^{-1}\right\|_2+\frac{\omega^2}{4(\mathcal{B}+\epsilon)}\\\nonumber
		\leq&(1+\omega)^2\left\|\left(H^{k,(t)}-\rho I_d\right)^{-1}\right\|_2+\frac{\omega^2}{4(\mathcal{B}+\epsilon)},
\end{align}
where we substitute \eqref{ineq:ysp-norm} and \eqref{ineq:y2} in the second inequality, and
use the fact that $H^{k,(t)}\succ H^{k,(t)}-\rho I_d\succ0$ in the last inequality.
 A standard argument for recurrence on \eqref{ineq:hkinv} shows that
\begin{align}\label{w24}
	&\left\|\left(H^{k,(\tilde{M})}-\rho I_d\right)^{-1}\right\|_2\\\nonumber
	\leq&(1+\omega)^{2(M-1)}\left(\left\|\left(H^{k,(1)}-\rho I_d\right)^{-1}\right\|_2+\frac{\frac{\omega^2}{4(\mathcal{B}+\epsilon)}}{(1+\omega)^2-1}\right)\\\nonumber
	\leq&(1+\omega)^{2(M-1)}\left(\left\|\left(H^{k,(1)}-\rho I_d\right)^{-1}\right\|_2+\frac{1}{4(\mathcal{B}+\epsilon)}\right),
\end{align}
where to derive the second inequality we use $$\frac{\frac{\omega^2}{4(\mathcal{B}+\epsilon)}}{(1+\omega)^2-1}=\frac{\omega}{(\omega+2)4(\mathcal{B}+\epsilon)} <\frac{1}{4(\beta+\epsilon)}.$$

Besides, by setting $t=0$ in the second inequality of \eqref{ineq:hkinv}, we know that
\begin{equation*}
\left\|\left(H^{k,(1)}-\rho I_d\right)^{-1}\right\|_2\leq (1+\omega)^2\beta^{-1}+\frac{\omega^2}{4(\mathcal{B}+\epsilon)}.
\end{equation*}
Substituting the above inequality into \eqref{w24}, we get
\begin{equation}\label{w25}
\begin{split}
&\left\|\left(H_i^{k,(\tilde{M})}-\rho I_d\right)^{-1}\right\|_2\\
\leq&(1+\omega)^{2(M-1)}\left((1+\omega)^2\beta^{-1}+\frac{1+\omega^2}{4(\mathcal{B}+\epsilon)}\right)\\
\leq&(1+\omega)^{2M}\left(\beta^{-1}+\frac{1}{4(\mathcal{B}+\epsilon)}\right),
\end{split}
\end{equation}
where we use $1+\omega^2\le (1+\omega)^2$ in the last inequality.
By taking inverse on both sides of \eqref{w25}, we get
\begin{equation*}
	\lambda_{\min} \left(H^{k,(\tilde{M})}\right)\geq\rho+(1+\omega)^{-2M}\left(\frac{1}{\beta}+\frac{1}{4(\mathcal{B}+\epsilon)}\right)^{-1}.
\end{equation*}
Thus, we obtain the lower bound of $M_1=\rho+(1+\omega)^{-2M}$ $\left(\frac{1}{\beta}+\frac{1}{4(\mathcal{B}+\epsilon)}\right)^{-1}$, which completes the proof.
\end{proof}
%%%%%%%%%%%%%%%%%%%%%%%%%%%%%%%%%%%%%%%%%%%%%%%%%%%%%%%%%%%%%%%%%%%%%%%%%%%%%%%%%%%%%%%%%%
\section{Damped Limited-memory BFGS for Hessian Inverse Approximation} \label{sec-bfgs}
In this section, we propose a damped limited-memory BFGS method to construct the local Hessian inverse approximations. Compared with the damped regularized limited-memory DFP method in Section \ref{sec-dfp}, the limited-memory technique here is used not only for bounding the  Hessian inverse approximations, but also for reducing storage and computation costs.  To be specific, the proposed method can be implemented by a two-loop recursion,  where  $H_i^k$ is not generated explicitly, and only its multiplication with vectors are computed.

\subsection{Damped Limited-memory BFGS}

Since the discussion holds for any node, from now on we omit the node index $i$ until the end of derivation.
The traditional BFGS is the solution to an optimization problem given by
\begin{equation}\label{10}
\begin{aligned}
H^{k+1}=\arg\min_Z& \; \|Z-H^k\|_Q, \\
\text{s.t.}&\; Z{y}^k={s}^k, Z=Z^T.
\end{aligned}
\end{equation}
where $\|\cdot\|_Q$ is a weighted Frobenius norm defined as $\|H\|_Q=\|Q^{\frac{1}{2}}HQ^{\frac{1}{2}}\|_F$ and $Q$ is an arbitrary positive semidefinite matrix satisfying the relation $Q{s}^k={y}^k$ \cite{nocedal2006numerical}. Note that in our decentralized setting, $s^k =  x^{k+1} - x^k$ is variable variation and $y^k = g^{k+1} - g^k$ is gradient approximation variation. The solution of
the semidefinite optimization in \eqref{10} is therefore the closest to $H^k$ in the sense of weighted norm
among all symmetric matrices that satisfy the secant condition.
The closed form solution of \eqref{10} is given by
\begin{align}\label{11}
H^{k+1}=&H^{k}-\frac{H^{k}y^k({s}^k)^T+{s}^k(y^k)^T H^k }{({s}^k)^Ty^k}\\\nonumber
&+\frac{{s}^k({s}^k)^T}{({s}^k)^Ty^k}\left(1+\frac{(y^k)^TH^{k}y^k}{({s}^k)^Ty^k}\right). 
%=&\left(I-\frac{s^k (y^k)^T}{(s^k)^T y^k}\right)H^k \left(I-\frac{y^k (s^k)^T}{(s^k)^T y^k}\right)+\frac{s^k(s^k)^T}{(s^k)^T y^k}
\end{align}

As we have mentioned in Section \ref{sec-dfp},  the gradient approximations $g^{k}$ are noisy such that it is nontrivial to preserve positive semidefiniteness and lower boundedness of $H^k$ (i.e., $\lambda_{\min}(H^{k})$ and $\lambda_{\max}(H^{k})$ must be both positive and finite). Naively adopting the BFGS update in \eqref{11} is unable to achieve this goal. Therefore, we combine the damping technique with the traditional BFGS method in \eqref{11}. To be specific, given a suitable $H^0\succ 0$, we define $\hat{y}^k$ as
\begin{equation}\label{12}
\hat{y}^k=\theta y^k+(1-\theta)(H^0+\epsilon I)^{-1} s^k,
\end{equation}
where $\epsilon>0$ is a parameter and $\theta$ is adaptively computed by
\begin{align}\label{ineq:thetak1}
	\theta^k=\min\left\{\tilde{\theta}^k,\frac{\tilde{L}\|s^k\|}{\|y^k\|}\right\},
\end{align}
with a parameter $\tilde{L}>0$ and the widely used parameter $\tilde{\theta}^k$ defined as \cite{chen2019stochastic}
\begin{align}\label{13}
\tilde{\theta}^k=\left\{\begin{array}{l}
\frac{0.75 (s^k)^{T}\left(H^{0}+\epsilon I_d\right)^{-1} s^k}{(s^k)^{T}\left(H^{0}+\epsilon I_d\right)^{-1} s^k-(s^k)^{T}y^k}, \\
\hspace{0.5em}\text { if } (s^k)^{T}y^k \leq 0.25 (s^k)^{T}\left(H^{0}+\epsilon I_d\right)^{-1} s^k,\\
1, \text { otherwise.}
\end{array}\right.
\end{align}
Similar to the proposed DFP-based method, with the added term $\frac{\tilde{L}\|{s}^k\|}{\|y^k\|}$ in \eqref{ineq:thetak1}, $\|\hat{y}^k\|$ can be upper bounded in terms of $\|s^k\|$. 
Then, we replace $y^k$ in \eqref{11} with $\hat{y}^k$, such that the damped BFGS becomes
\begin{align}\label{14}
H^{k+1}=&H^{k}-\frac{H^{k}\hat{y}^k({s}^k)^T+{s}^k(\hat{y}^k)^T H^k }{({s}^k)^T\hat{y}^k}\\\nonumber
&+\frac{{s}^k({s}^k)^T}{({s}^k)^T\hat{y}^k}\left(1+\frac{(\hat{y}^k)^TH^{k}\hat{y}^k}{({s}^k)^T\hat{y}^k}\right).
%=&\left(I-\frac{s^k (\hat{y}^k)^T}{(s^k)^T \hat{y}^k}\right)H^k \left(I-\frac{\hat{y}^k (s^k)^T}{(s^k)^T \hat{y}^k}\right)+\frac{s^k(s^k)^T}{(s^k)^T \hat{y}^k}
\end{align}
As we will prove later, the damping technique guarantees that  $({s}^k)^T\hat{y}^k>0$, such that $\lambda_{\min}(H^{k+1})>0 $.

On the other hand, to guarantee that $\lambda_{\max}(H^{k+1}) < \infty$, we use the limited-memory technique. From now on, we specify the node index $i$. For time $k$, set an initial Hessian inverse approximation as
\begin{equation}\label{w21}
H_i^{k,(0)}=\min\left\{\max\left\{\frac{(s_i^k)^Ty_i^k}{(y_i^k)^Ty_i^k},{\beta}\right\},\mathcal{B}\right\} I_d.
\end{equation}
Given the two sequences $\{{s}_i^p\}$ and $\{\hat{y}_i^p\}$, $p=k+1-\tilde{M}, \ldots, k$, where $\tilde{M}=\min\{k+1,M\}$ and $M$ is the memory size, the damped limited-memory BFGS at node $i$ updates as
\begin{align}\label{15}
&H_i^{k,(t+1)}=H_i^{k,(t)}-\frac{H_i^{k,(t)}\hat{y}_i^p({s}_i^p)^T+{s}_i^p(\hat{y}_i^p)^T H_i^{k,(t)} }{({s}_i^p)^T\hat{y}_i^p}\\\nonumber
&\hspace{4.7em}+\frac{{s}_i^p({s}_i^p)^T}{({s}_i^p)^T\hat{y}_i^p}\left(1+\frac{(\hat{y}_i^p)^TH_i^{k,(t)}\hat{y}_i^p}{({s}_i^p)^T\hat{y}_i^p}\right)\\\nonumber
&=\left(I_d-\frac{s_i^p (\hat{y}_i^p)^T}{(s_i^p)^T \hat{y}_i^p}\right)H_i^{k,(t)} \left(I_d-\frac{\hat{y}_i^p (s_i^p)^T}{(s_i^p)^T \hat{y}_i^p}\right)+\frac{s_i^p(s_i^p)^T}{(s_i^p)^T \hat{y}_i^p},
\end{align}
where $p=k+1-\tilde{M}+t$ and $t=0,\ldots,\tilde{M}-1.$ The second equality will be used for the analysis.
At the end of this inner loop, we set $H_i^{k+1}=H_i^{k,(\tilde{M})}$.

Compared with the proposed DFP-based method in Algorithm \ref{alg-DFP}, one advantage of
the proposed BFGS-based method is that the update \eqref{15} can be realized by a two-loop recursion,  where  $H_i^{k,(t)}$ is not generated explicitly, and only its multiplications with vectors are computed. The damped limited-memory BFGS for Newton direction approximation at node $i$ is summarized in Algorithm  \ref{alg-BFGS}. The two-loop recursion at node $i$ is summarized in Algorithm \ref{alg-loop}.

\floatname{algorithm}{Algorithm}
\begin{algorithm}[htbp]
	\caption{Damped limited-memory BFGS for Newton direction approximation run on agent $i$} \label{alg-BFGS}
	\begin{algorithmic}[1]
		\Require $\beta$; $\mathcal{B}$; $\epsilon$; $\tilde{L}$; $M$.
%		\For {$k =0,1,2,\ldots$}
%		\State  Update local variable:
%		\setlength\abovedisplayskip{2pt}
%		\setlength\belowdisplayskip{2pt}
%		$$ x_i^{k+1}=\sum_{r=1}^{n} w_{ir}x_r^k-\alpha d^k_i.$$
%		\State Select a mini-batch  $S_i^{k+1} \subseteq   \{1,\ldots,m_i\}$ with size $b_i$.
%		\State If $\mod(k+1,T)=0$ then $\tau_i^{k+1}=x_i^{k+1}$; else $\tau_i^{k+1}=\tau_i^{k}$.
%		\State Update local stochastic gradient estimator:
%		$$v_i^{k+1}=\frac{1}{b_i}\sum_{j\in S^{k+1}_i}\Big(\nabla f_{i,j}(x_i^{k+1})-\nabla f_{i,j}(\tau_i^{k+1})\Big)+\nabla f_{i}(\tau_i^{k+1}).$$
%		\State Update the local gradient tracker:
%		$$g_i^{k+1}=\sum_{r=1}^{n} w_{ir}g_r^{k}+v_i^{k+1}-v_i^k.$$
		\State Update variable variation
		$s_i^k=x_i^{k+1}-x_i^{k}.$
		\State Update the gradient variation
		$y_i^k=g_i^{k+1}-g_i^{k}.$
		\State Update modified gradient  variation $\hat{y}_i^k$ as in \eqref{12}.
		\State Initialize  $H_i^{k,(0)}$ as in \eqref{w21}.
		\State Set $\tilde{M}=\min\{k+1,M\}$ and load $\{{s}_i^p,\hat{y}_i^p\}_{p=k+1-\tilde{M}}^{k}$.
		\State Perform two-loop limited-memory BFGS in Algorithm \ref{alg-loop}.
		%		\For {{$t=0,\ldots,\tilde{M}-1$}}
		%		\State Set $j=k+1-\tilde{M}+t$ and update the estimation of Hessian inverse as
		%		\begin{align*}
		%		H_i^{k,(t+1)}=&H_i^{k,(t)}-\frac{H_i^{k,(t)}\hat{y}_i^j({s}_i^j)^T+{s}_i^j(\hat{y}_i^j)^T H_i^{k,(t)} }{({s}_i^j)^T\hat{y}_i^j+{y}}\\
		%		&+\frac{{s}_i^j({s}_i^j)^T}{({s}_i^j)^T\hat{y}_i^j}\left(1+\frac{(\hat{y}_i^j)^TH_i^{k,(t)}\hat{y}_i^j}{({s}_i^j)^T\hat{y}_i^j+{y}}\right).
		%		\end{align*}
		%		\EndFor
		%		\State Set $H_i^{k+1}=H_i^{k,(\tilde{M})}$
		%		\State Update the direction:
		%		$$d_i^{k+1}=H_i^{k+1} g_i^{k+1}.$$
	%	\EndFor
		\State Output direction  $d_i^{k+1}=H_i^{k+1}g_i^{k+1}$.
	\end{algorithmic}
\end{algorithm}
\begin{algorithm}
	\caption{Two-loop limited-memory BFGS run on agent $i$} \label{alg-loop}
	\begin{algorithmic}
		\State Set $q_i\gets g_i^{k+1}$.
		\For {$p =k,k-1,\ldots,k+1-\tilde{M}$}
		\State $\alpha_i^p \gets \frac{(s_i^p)^Tq_i}{({s}_i^p)^T\hat{y}_i^p}$.
		\State $q_i\gets q_i-\alpha_i^p\hat{y}_i^p$.
		\EndFor
		\State $r_i\gets H_i^{k,(0)}q_i$.
		\For {$p =k+1-\tilde{M},k-\tilde{M},\ldots,k$}
		\State $\beta_i \gets \frac{(\hat{y}_i^p)^T r_i}{({s}_i^p)^T\hat{y}_i^p}$.
		\State $r_i\gets r_i+s_i^p(\alpha_i^p-\beta_i)$.
		\EndFor
		\State Output $H_i^{k+1} g_i^{k+1}=r_i$.
	\end{algorithmic}
\end{algorithm}

\begin{remark}
Compared with the damped regularized limited-memory DFP method, the proposed damp limited-memory BFGS does not use the regularization term parameterized by $\rho$. The reason is that adding such a regularization term $\rho I_d$ at the end of update \eqref{15} will make it difficult to realize the two-loop recursion. How to develop decentralized regularized BFGS which is friendly to two-loop recursion will be our future work.
The  memory requirement and computation cost per iteration of the proposed BFGS are $O(Md)$ and $O(Md)$, respectively. In contrast, the memory requirement and computation cost per iteration of the proposed DFP are $O(d^2+Md)$ and $O(Md^2)$, respectively.
\end{remark}
%%%%%%%%%%%%%%%%%%%%%%%%%%%%%%%%%%%%%%%%%%%%%%%%%%%%%%%%%%%%%%%%%%%%%%%%%%%%%%%%%%%%%%%%%%
\subsection{Bounded Positive Eigenvalues of $H_i^k$ Constructed by BFGS}
In the following analysis, we will prove that the Hessian inverse approximations constructed by the proposed damped limited-memory BFGS are positive definite and have bounded eigenvalues.

Consider the update \eqref{15}. The following Lemma shows the damping technique  guarantees that  $({s}_i^p)^T\hat{y}_i^p>0$, which yields $\lambda_{\min}(H_i^{k})>0$. The proof is similar to that of Lemma \ref{lem-dfp-pd}, we write it down here for completeness.

\begin{lemma}\label{lem-bfgs-pd}
	Considering the update in \eqref{15}, with the corrected $\hat{y}_i^p$ by the damping technique, we have
	\begin{equation*}
	\text{$0<\theta_i^p\le 1$ and
	$(s_i^p)^T\hat{y}_i^p\ge 0.25 (s_i^p)^T(H_i^{k,(0)}+\epsilon I_d)^{-1}s_i^p$.}
	\end{equation*}	
	Moreover, with the initialization $H_i^{k,(0)}$ defined in \eqref{w21},  $H_i^{k+1}$ generated by the damped  limited-memory BFGS in Algorithm \ref{alg-BFGS} on each node keeps positive definite and $\lambda_{\min}(H_i^{k+1})>0$.
\end{lemma}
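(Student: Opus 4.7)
The plan is to mirror the proof of Lemma \ref{lem-dfp-pd}, substituting $s_i^p$ for $\hat{s}_i^p$ throughout and exploiting the alternative ``sandwich'' form of the BFGS update in the last step. As before, the node index $i$ can be dropped since the argument is local.

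First I would establish $0<\theta^p\le 1$. Inspecting the definition of $\tilde{\theta}^p$ in \eqref{13}, in the ``otherwise'' branch we have $\tilde{\theta}^p=1$ trivially, while in the conditional branch the inequality $(s^p)^Ty^p\le 0.25(s^p)^T(H^{k,(0)}+\epsilon I_d)^{-1}s^p$ can be substituted directly into the numerator/denominator ratio to give $\tilde{\theta}^p\le 0.75/(1-0.25)=1$. Positivity of $\tilde{\theta}^p$ follows because $H^{k,(0)}\succ 0$ forces the denominator to remain strictly positive. Taking the minimum with $\tilde{L}\|s^p\|/\|y^p\|>0$ then preserves the bounds $0<\theta^p\le 1$.

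Next I would verify the inner-product lower bound. Plugging the definition \eqref{12} of $\hat{y}^p$ into $(s^p)^T\hat{y}^p$ and collecting terms yields
\begin{equation*}
(s^p)^T\hat{y}^p=\theta^p(s^p)^Ty^p+(1-\theta^p)(s^p)^T(H^{k,(0)}+\epsilon I_d)^{-1}s^p.
\end{equation*}
A case split on whether $(s^p)^Ty^p$ exceeds the threshold $0.25(s^p)^T(H^{k,(0)}+\epsilon I_d)^{-1}s^p$ mirrors exactly the computation in the proof of Lemma \ref{lem-dfp-pd}: in the ``otherwise'' case $\theta^p=\tilde{\theta}^p=1$ and the bound is immediate from the threshold; in the conditional case substituting the closed form \eqref{13} causes cancellation and leaves precisely $0.25(s^p)^T(H^{k,(0)}+\epsilon I_d)^{-1}s^p$. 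Because $H^{k,(0)}$ in \eqref{w21} is a positive multiple of $I_d$, the right-hand side is strictly positive whenever $s^p\ne 0$, which gives $(s^p)^T\hat{y}^p>0$.

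Finally I would establish positive definiteness of $H^{k,(t+1)}$ by induction on $t$, using the second (sandwich) form in \eqref{15}:
\begin{equation*}
H^{k,(t+1)}=\Bigl(I_d-\tfrac{s^p(\hat{y}^p)^T}{(s^p)^T\hat{y}^p}\Bigr)H^{k,(t)}\Bigl(I_d-\tfrac{\hat{y}^p(s^p)^T}{(s^p)^T\hat{y}^p}\Bigr)+\tfrac{s^ps^{pT}}{(s^p)^T\hat{y}^p}.
\end{equation*}
The first term is of the form $P H^{k,(t)} P^T$ with $P=I_d-s^p(\hat{y}^p)^T/((s^p)^T\hat{y}^p)$, hence positive semidefinite whenever $H^{k,(t)}\succ 0$. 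The second term is a rank-one positive semidefinite matrix whose range contains $s^p$, and since $(I-s^p(\hat{y}^p)^T/((s^p)^T\hat{y}^p))$ annihilates exactly the direction $s^p$, any null vector of $PH^{k,(t)}P^T$ must lie in span$(s^p)$, on which the second term is strictly positive. Thus $H^{k,(t+1)}\succ 0$, and by induction $H^{k+1}=H^{k,(\tilde{M})}\succ 0$.

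The only subtle point compared with the DFP case is the absence of the regularizer $\rho I_d$, so positive definiteness must be extracted directly from the rank-one correction rather than from a uniform shift; the sandwich form above is exactly what makes this transparent. I expect no further obstacle.
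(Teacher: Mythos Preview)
Your approach is essentially identical to the paper's: mirror the DFP lemma with $s^p$ in place of $\hat s^p$, and use the sandwich form of the BFGS update to argue positive definiteness. One small slip in your last step: with $P=I_d-s^p(\hat y^p)^T/((s^p)^T\hat y^p)$, the null space of $PH^{k,(t)}P^T$ is $\operatorname{span}(\hat y^p)$, not $\operatorname{span}(s^p)$, since you need $P^Tz=0$ (not $Pz=0$) to kill the quadratic form; the rank-one term $s^p(s^p)^T/((s^p)^T\hat y^p)$ is still strictly positive on $\operatorname{span}(\hat y^p)$ because $(s^p)^T\hat y^p>0$, so the conclusion stands. The paper's own proof sidesteps this by simply writing $z^TH^{k,(t+1)}z=z^TPH^{k,(t)}P^Tz+(s^{pT}z)^2/((s^p)^T\hat y^p)$ and invoking $(s^p)^T\hat y^p>0$ without the explicit null-space analysis.
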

\begin{proof}
	Since the analysis holds for any node, we omit the node index $i$ in the proof.
	With $\tilde{\theta}^p$ defined in \eqref{13}, if $(s^p)^{T}y^p > 0.25 (s^p)^{T}\left(H^{k,(0)}+\epsilon I_d\right)^{-1} s^p$, we have $\tilde{\theta}^p=1$. If $(s^p)^{T}y^p \le 0.25 (s^p)^{T}\left(H^{k,(0)}+\epsilon I_d\right)^{-1} s^p$, substituting this inequality into the definition of $\tilde{\theta}^p$, we have
	\begin{align*}
	\tilde{\theta}^p=&\frac{0.75 (s^p)^{T}\left(H^{k,(0)}+\epsilon I_d\right)^{-1} s^p}{(s^p)^{T}\left(H^{k,(0)}+\epsilon I_d\right)^{-1} s^p-(s^p)^{T}y^p}\\
	\le& \frac{0.75 (s^p)^{T}\left(H^{k,(0)}+\epsilon I_d\right)^{-1} s^p}{(s^p)^{T}\left(H^{k,(0)}+\epsilon I_d\right)^{-1} s^p-[0.25 (s^p)^{T}\left(H^{k,(0)}+\epsilon I_d\right)^{-1} s^p]}\\
	=&1.
	\end{align*}
	Obviously, with $H^{k,(0)}\succ 0$, we have $\tilde{\theta}^p> 0$. Since $
	\theta^p=\min\left\{\tilde{\theta}^p,\frac{\tilde{L}\|s^p\|}{\|y^p\|}\right\}$, with $0<\tilde{\theta}^p\le 1$, we have $0<\theta^p\le 1$.
	
	Moreover, substituting the definitions of $\hat{y}^p$ in \eqref{12} and $\theta^p$ in \eqref{ineq:thetak1}, we compute
	\begin{align}
	&(s^p)^T\hat{y}^p\\\nonumber
	=&(s^p)^T\left(\theta^p y^p+(1-\theta^p)(H^{k,(0)}+\epsilon I)^{-1} s^p\right)\\\nonumber
	=& \theta^p\left[(s^p)^Ty^p-(s^p)^T(H^{k,(0)}+\epsilon I)^{-1}s^p\right]\\\nonumber
	&+(s^p)^T(H^{k,(0)}+\epsilon I)^{-1}s^p\\\nonumber
	=&\left\{\begin{array}{l}
	{0.25 (s^p)^{T}\left(H^{k,(0)}+\epsilon I_d\right)^{-1} s^p}, \\
	\hspace{1em} \text { if } (s^p)^{T}y^p \leq 0.25 (s^p)^{T}\left(H^{k,(0)}+\epsilon I_d\right)^{-1} s^p,\\
	(s^p)^T y^p, \text { otherwise,}
	\end{array}\right.
	\end{align}
	which implies
	\begin{equation*}
	(s^p)^T\hat{y}^p\ge 0.25 (s^p)^{T}\left(H^{k,(0)}+\epsilon I_d\right)^{-1} s^p.
	\end{equation*}
	By the initialization $H^{k,(0)}$ in \eqref{w21}, we have $(s^p)^T\hat{y}^p>0$.
	To show that $H^{k+1}$ is positive definite, for the second inequality \eqref{15} and any nonzero $z\in \mathbb{R}^d$, we have
	\begin{align*}
	&z^TH^{k,(t+1)}z\\\nonumber
	=&z^T\left(I_d-\frac{s \hat{y}^T}{s^T \hat{y}}\right)H\left(I_d-\frac{ \hat{y} s^T}{s^T \hat{y}}\right)z +\frac{(s^Tz)^2}{s^T \hat{y}},
	\end{align*}
	where  we omit all the time indexes at the right-hand side of \eqref{15} for simplicity. With $s^T\hat{y}>0$, we know that $H^{k,(t+1)}$ is positive definite and $\lambda_{\min}(H^{k,(t+1)})>0$, which completes the proof.
\end{proof}

Based on Lemma \ref{lem-bfgs-pd}, the following theorem further gives the specific lower and upper bounds for the eigenvalues of $H_i^k$ generated by Algorithm \ref{alg-BFGS}.
\begin{theorem}\label{thm-bfgs}
Consider the damped limited-memory BFGS method in Algorithm \ref{alg-BFGS}. We have
\begin{equation*}
M_1 I_d\preceq H_i^k\preceq M_2 I_d,
\end{equation*}
	where $M_1= \left(\frac{1}{\beta}+\frac{M\omega^2}{4(\mathcal{B}+\epsilon)}\right)^{-1}$, $M_2=(1+\omega)^{2M}$ $\left(\mathcal{B}+\frac{1}{\tilde{L}(\omega+2)}\right)$ and  $\omega\triangleq4(\mathcal{B}+\epsilon)\left(\tilde{L}+\frac{1}{\beta+\epsilon}\right)$.
\end{theorem}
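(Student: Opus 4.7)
The proof will mirror the structure of Theorem \ref{thm-dfp}: drop the node index $i$, bound $\|H^{k,(t+1)}\|_2$ recursively for the upper bound, and bound $\|(H^{k,(t+1)})^{-1}\|_2$ recursively for the lower bound. The two key ingredients, already available from Lemma \ref{lem-bfgs-pd} and the definitions, are
\begin{equation*}
(s^p)^T\hat{y}^p \ge \frac{0.25\|s^p\|^2}{\mathcal{B}+\epsilon}, \qquad \|\hat{y}^p\| \le \Bigl(\tilde{L}+\tfrac{1}{\beta+\epsilon}\Bigr)\|s^p\|,
\end{equation*}
the second following from \eqref{12}, \eqref{ineq:thetak1}, and $H^{k,(0)}\succeq \beta I_d$, exactly as in \eqref{ineq:hatyp}. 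These two estimates together yield $\|s^p\hat{y}^{pT}/(s^{pT}\hat{y}^p)\|_2 \le \omega$ and $\|s^p s^{pT}/(s^{pT}\hat{y}^p)\|_2 \le 4(\mathcal{B}+\epsilon)$.

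\textbf{Upper bound.} Here I would use the second (symmetric product plus rank-one) form of \eqref{15}. Taking the spectral norm and applying the triangle inequality gives
\begin{equation*}
\|H^{k,(t+1)}\|_2 \le \Bigl(1+\tfrac{\|s^p\|\|\hat{y}^p\|}{(s^p)^T\hat{y}^p}\Bigr)^2 \|H^{k,(t)}\|_2 + \tfrac{\|s^p\|^2}{(s^p)^T\hat{y}^p} \le (1+\omega)^2\|H^{k,(t)}\|_2 + 4(\mathcal{B}+\epsilon).
\end{equation*}
Unrolling this linear recursion from $t=0$ to $\tilde{M}-1$ with initial condition $\|H^{k,(0)}\|_2 \le \mathcal{B}$, summing the geometric series $\sum_{j=0}^{\tilde M-1}(1+\omega)^{2j} \le (1+\omega)^{2M}/((1+\omega)^2-1)$, and using $(1+\omega)^2-1 = \omega(\omega+2)$ together with $4(\mathcal{B}+\epsilon)/\omega = 1/(\tilde{L}+1/(\beta+\epsilon)) \le 1/\tilde{L}$ will produce exactly $M_2 = (1+\omega)^{2M}(\mathcal{B}+1/(\tilde{L}(\omega+2)))$.

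\textbf{Lower bound.} This is where the BFGS structure is cleaner than DFP, and I want to exploit it rather than copy the Sherman--Morrison--Woodbury manipulation of \eqref{w20}. Writing $B^{k,(t)} := (H^{k,(t)})^{-1}$, the BFGS update \eqref{15} on $H$ is equivalent to the direct BFGS update on $B$:
\begin{equation*}
B^{k,(t+1)} = B^{k,(t)} - \frac{B^{k,(t)} s^p (s^p)^T B^{k,(t)}}{(s^p)^T B^{k,(t)} s^p} + \frac{\hat{y}^p (\hat{y}^p)^T}{(s^p)^T\hat{y}^p}.
\end{equation*}
Writing $u = (B^{k,(t)})^{1/2}s^p$, the first two terms equal $(B^{k,(t)})^{1/2}(I_d - uu^T/\|u\|^2)(B^{k,(t)})^{1/2}$, which is positive semidefinite and $\preceq B^{k,(t)}$. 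Hence $B^{k,(t+1)} \preceq B^{k,(t)} + \hat{y}^p(\hat{y}^p)^T/((s^p)^T\hat{y}^p)$, so
\begin{equation*}
\|B^{k,(t+1)}\|_2 \le \|B^{k,(t)}\|_2 + \tfrac{\|\hat{y}^p\|^2}{(s^p)^T\hat{y}^p} \le \|B^{k,(t)}\|_2 + \tfrac{\omega^2}{4(\mathcal{B}+\epsilon)}.
\end{equation*}
This is an \emph{additive} recursion, so unrolling from $\|B^{k,(0)}\|_2 \le 1/\beta$ immediately gives $\|B^{k,(\tilde{M})}\|_2 \le 1/\beta + M\omega^2/(4(\mathcal{B}+\epsilon))$, and taking reciprocals yields $\lambda_{\min}(H^{k+1}) \ge M_1$.

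\textbf{Main obstacle.} There is no serious technical obstacle here once one notices the cleanest route. The pitfall is applying Sherman--Morrison--Woodbury to \eqref{15} in the manner of \eqref{w20}, which would produce a multiplicative recursion like the DFP case and a different (weaker) lower bound. The real trick is realizing that BFGS on $H$ is equivalent to direct BFGS on $B=H^{-1}$, and that the subtracted rank-one piece in the latter is a projection-like term that keeps $B^{k,(t+1)} \preceq B^{k,(t)} + \hat{y}\hat{y}^T/(s^T\hat{y})$. After that, the bookkeeping to extract the stated constants $M_1$, $M_2$ is routine.
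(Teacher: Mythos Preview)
Your proposal is correct and follows essentially the same route as the paper. For the upper bound you and the paper both take the spectral norm of the product form in \eqref{15} to obtain the multiplicative recursion $\|H^{k,(t+1)}\|_2\le(1+\omega)^2\|H^{k,(t)}\|_2+4(\mathcal{B}+\epsilon)$ and unroll it identically. For the lower bound, the paper applies Sherman--Morrison--Woodbury to \eqref{15} and obtains precisely your direct BFGS update on $B=(H)^{-1}$ (this is \eqref{eq:bfgs-inv}); it then drops the subtracted rank-one term as positive semidefinite, yielding the same additive recursion $\|B^{k,(t+1)}\|_2\le\|B^{k,(t)}\|_2+\omega^2/(4(\mathcal{B}+\epsilon))$ that you derive. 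So the ``pitfall'' you warn against---a multiplicative recursion in the style of \eqref{w20}---is not what the paper does; its SMW step lands on exactly the dual BFGS formula you invoke, and from there the two arguments coincide.
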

\begin{proof}
Since the analysis holds for any node, we omit the node index $i$ in the proof.
Note that the inequalities  \eqref{ineq:syp}, \eqref{ineq:hatyp}, \eqref{ineq:ysp-norm} and \eqref{ineq:y2} for the proposed DFP method also hold for the proposed BFGS method, as long as we replace $\hat{s}$ with $s$. Thus,  we directly use these inequalities by replacing $\hat{s}$ with $s$  and omit the proof.

First, we establish the upper bound.
According to the update \eqref{15}, we have
\begin{align}\label{w34}\nonumber
	\|H^{k,(t+1)}\|_2&\leq \|H^{k,(t)}\|_2\cdot\left\|I-\frac{s^p (\hat{y}^p)^T}{(s^p)^T \hat{y}^p}\right\|_2^2+\left\|\frac{{s}^p({s}^p)^T}{({s}^p)^T\hat{y}^p}\right\|_2\\
	&\leq (1+\omega)^2\|H^{k,(t)}\|_2+4(\mathcal{B}+\epsilon),
\end{align}
where we use \eqref{ineq:ysp-norm} and \eqref{ineq:syp} in the last inequality. Following the standard argument for recurrence on \eqref{w34}, we have
\begin{align}\label{w35}\nonumber
	\|H_i^{k,(\tilde{M})}\|_2\leq&(1+\omega)^{2M}\left(\|H_i^{k,(0)}\|_2+\frac{4(\mathcal{B}+\epsilon)}{(1+\omega)^2-1}\right)\\
	\leq&(1+\omega)^{2M}\left(\mathcal{B}+\frac{1}{\tilde{L}(\omega+2)}\right),
\end{align}
where where to derive the last inequality we use $$\frac{4(\mathcal{B}+\epsilon)}{(1+\omega)^2-1}=\frac{1}{(\tilde{L}+\frac{1}{\beta+\epsilon})(\omega+2)}<\frac{1}{\tilde{L}(\omega+2)}.$$ Thus, we obtain the upper bound given by $M_2=(1+\omega)^{2M}$ $\left(\mathcal{B}+\frac{1}{\tilde{L}(\omega+2)}\right)$.

Next, we establish the lower bound. Using the Sherman-Morrison-Woodbury formula on \eqref{15}, we get
\begin{align}\label{eq:bfgs-inv}	
	&\left(H^{k,(t+1)}\right)^{-1}\\\nonumber
	=&\left(H^{k,(t)}\right)^{-1}+\frac{\hat{y}^p(\hat{y}^p)^T}{(s^p)^T\hat{y}^p}-\frac{\left(H^{k,(t)}\right)^{-1}s^p(s^p)^T\left(H^{k,(t)}\right)^{-1}}{(s^p)^TH^{k,(t)}s^p},
\end{align}
which implies
\begin{align}\label{w37}
	\left\|\left(H^{k,(t+1)}\right)^{-1}\right\|_2\leq&\left\|\left(H^{k,t}\right)^{-1}\right\|_2+\left\|\frac{\hat{y}^p(\hat{y}^p)^T}{(s^p)^T\hat{y}^p}\right\|_2\\\nonumber
	\leq &\left\|\left(H^{k,t}\right)^{-1}\right\|_2+\frac{\omega^2}{4(\mathcal{B}+\epsilon)},
\end{align}
where we use \eqref{ineq:y2} in the last inequality. Following the standard argument for recurrence on \eqref{w37}, we have
\begin{equation*}
\left\|\left(H_i^{k,(\tilde{M})}\right)^{-1}\right\|_2\leq\left\|\left(H_i^{k,0}\right)^{-1}\right\|_2+\frac{M\omega^2}{4(\mathcal{B}+\epsilon)},
\end{equation*}
which implies
\begin{equation*}
\lambda_{\min}\left(H_i^{k,(\tilde{M})}\right)\geq\left(\frac{1}{\beta}+\frac{M\omega^2}{4(\mathcal{B}+\epsilon)}\right)^{-1}.
\end{equation*}
Therefore, we obtain the lower bound given by $M_1=\left(\frac{1}{\beta}+\frac{M\omega^2}{4(\mathcal{B}+\epsilon)}\right)^{-1}$ and complete the proof.
\end{proof}

\begin{remark}\label{remark:2}
Regarding the theoretical results given by Theorems \ref{thm-dfp} and \ref{thm-bfgs}, we have the following comments.
\begin{enumerate}
\item
 The Hessian inverse approximations $H_i^k$ constructed by the proposed DFP and BFGS methods satisfy Assumption \ref{asm-Hbound} and thus fit into the general framework in Part I for exact linear convergence.
\item  With a large memory size $M$, the eigenvalues of $H_i^k$ have a wide range and thus $H_i^k$ may be almost singular. {One conjecture is that noise caused by randomness and disagreement accumulates more with larger $M$.} On the other hand, observe from the updates of the proposed quasi-Newton methods that a too small memory size $M$ may lead to insufficient second-order curvature information. Thus, we recommend to use a moderate memory size $M$, which leads to low computation and memory costs without sacrificing the performance.
\item For the proposed DFP method, the regularization term $\rho I_d$ in \eqref{a13} lifts the lower bound $M_1$ by $\rho$ and lifts the upper bound $M_2$ by $M\rho$. It is the limited-memory technique that prevents the regularization term $\rho I_d$ from accumulating to infinity.  The analysis of Theorem \ref{thm-dfp} also holds for $\rho=0$. However, we observe from the numerical experiments that a suitably tuned $\rho>0$ can improve the performance.
\item The analysis in Theorems \ref{thm-dfp} and \ref{thm-bfgs} also hold for $\epsilon=0$. However, we observe from the numerical experiments that a suitably tuned $\epsilon>0$ can improve the performance, especially for the proposed BFGS method.
\end{enumerate}
\end{remark}
%%%%%%%%%%%%%%%%%%%%%%%%%%%%%%%%%%%%%%%%%%%%%%%%%%%%%%%%%%%%%%%%%%%%%%%%%%%%%%%%%%%%%%%%%%
\section{numerical experiments}
In this section, we embed the proposed DFP method in Algorithm \ref{alg-DFP} and the proposed BFGS method in Algorithm \ref{alg-BFGS} into the general framework of Part I and evaluate their performance. We use the two resultant algorithms to solve a least-squares problem with synthetic data in Section \ref{num-condition}  and a logistic regression problem with real data in Section \ref{num-first-order}--\ref{num-topo}.  We randomly generate a connected and undirected network with $n$ nodes and $\frac{\varrho n(n-1)}{2}$ edges, where $\varrho\in(0,1]$ is the connectivity ratio.
The performance metric is the relative error defined as
$$\text{relative error}=\frac{\left\|\mathbf{x}^{k}-\mathbf{x}^{*}\right\|^2} {n\left\|\mathbf{x}^{0}-\mathbf{x}^{*}\right\|^2},$$
where the optimal solution $x^*$ is pre-computed through a centralized Newton method.
%%%%%%%%%%%%%%%
\subsection{Effects of Condition Number}\label{num-condition}
We consider a least-squares problem defined as
\begin{equation*}
 \underset{{x} \in \mathbb{R}^{d}}{\operatorname{min}}  \frac{1}{2}\sum_{i=1}^{n}\|A_ix-b_i\|^{2},
\end{equation*}
where $A_i\in \mathbb{R}^{m\times d}$ and $b_i\in \mathbb{R}^m$ are synthetic data privately owned by node $i$. Here, we set $m=500$ and $d=8$. For simplicity, we define aggregated variables   $A=[A_1;\cdots;A_n]\in \mathbb{R}^{nm\times d}$ and $b=[b_1;\cdots;b_n]\in \mathbb{R}^{nm}$ by vertically  stacking the local data. We define the condition number of the problem as $$\kappa_{LS}=\frac{\lambda_{\max}(A^TA)}{\lambda_{\min}(A^TA)}.$$
To show the effects of the condition number, we generate two groups of data with $\kappa_{LS}=10$ and $\kappa_{LS}=2000$ as follows. For $\kappa_{LS}=10$, we fix $\lambda_{\min}(A^TA)=0.1$ and  $\lambda_{\max}(A^TA)=1$. For $\kappa_{LS}=2000$, we fix $\lambda_{\min}(A^TA)=0.001$ and  $\lambda_{\max}(A^TA)=2$.  The other $(d-2)$ eigenvalues are randomly generated within the interval $\big[\lambda_{\min}(A^TA),\lambda_{\max}(A^TA)\big]$. Figs. \ref{kappa_10} and \ref{kappa_2000} record the results for $\kappa_{LS}=10$ and $\kappa_{LS}=2000$, respectively.  The parameters are set as follows. We set $n=20$ and the connectivity ratio is $\varrho=0.5$. We set $\mathcal{B}=10^4$ for the two proposed quasi-Newton methods. When $\kappa_{LS}=10$ $(2000)$,
for the proposed DFP method, we set $\alpha=0.6 $ $(0.6)$, $\rho=10^{-5}$ $ (10^{-5})$, $\epsilon=3 $ $ (5)$, $\beta=0.04$ $ (0.01)$,
$\tilde{L}=10$ $(10)$,
the memory size $M=20$ $ (20)$, the batch size  $b_i=10$ $ (15)$.
For the proposed BFGS method, we set $\alpha=0.6 $ $(0.6)$, $\epsilon=3 $ $ (37)$, $\beta=0.04$ $ (0.01)$, $\tilde{L}=10$ $(10)$, the memory size $M=20$ $ (50)$, the batch size $b_i=10$ $ (15)$, respectively. We compare with the existing first-order methods, DSA \cite{mokhtari2016dsa}, GT-SVRG and GT-SAGA \cite{xin2020variance},  and  Acc-VR-DIGing \cite{li2020optimal}.
For DSA, we set the stepsize $\alpha=0.9 $ $(0.9)$ and the batch size $b_i=1 $ $(1)$. For GT-SVRG, we set the stepsize  $\alpha=0.9$ $ (0.9)$ and the batch size $b_i=1$ $ (1)$. For GT-SAGA, we set the stepsize $\alpha=0.95 $ $(0.95)$ and the batch size $b_i=1 $ $(1)$. For Acc-VR-DIGing, we set the step size $\alpha=0.9 $ $(0.9)$, the two parameters $\theta_1=0.2 $ $(0.1)$ and $\theta_2=0.01 $ $(0.01)$, while the batch size $b_i=1 $ $(1)$. Note that all the first-order methods use batch size $b_i=1$, which yields the faster convergence in terms of the number of epochs in this set of numerical experiments.

From Figs. \ref{kappa_10} and \ref{kappa_2000},  the proposed quasi-Newton methods outperform the existing first-order methods, and their advantages are more obvious for the ill-conditioned problem. The proposed DFP method performs better than the BFGS method in terms of the number of epochs, but the BFGS method has lower computation and storage complexity, as we have discussed in Remark \ref{remark:2}.

\begin{figure}
	\centering
	\centerline{\includegraphics[width=6cm]{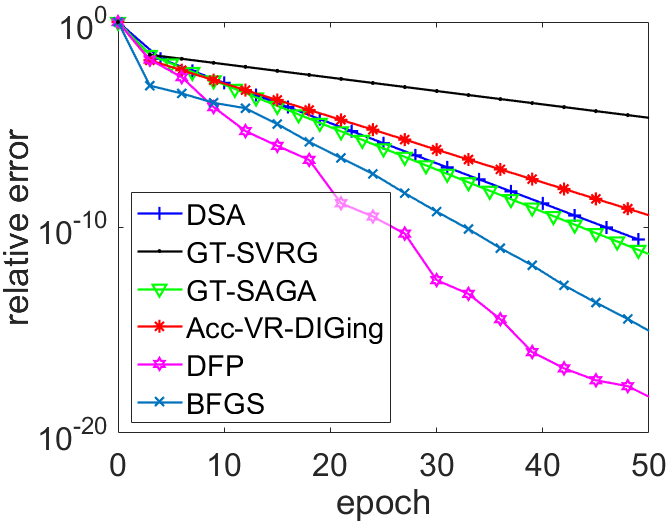}}
	\caption{Least-squares problem with $\kappa_{LS}=10$.}
	\label{kappa_10}
\end{figure}
\begin{figure}
	\centering
	\centerline{\includegraphics[width=6cm]{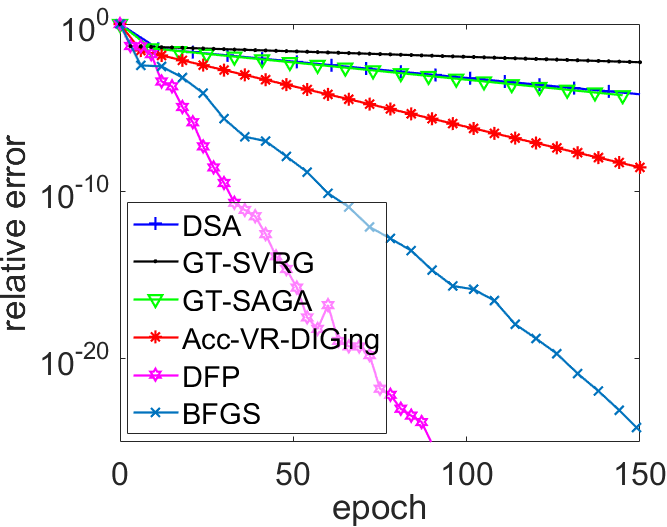}}
	\caption{Least-squares problem with $\kappa_{LS}=2000$.}
	\label{kappa_2000}
\end{figure}

\subsection{Comparison with First-order Algorithms: Real Datasets}\label{num-first-order}
The ensuing numerical experiments evaluate on the real datasets.  We use the proposed quasi-Newton methods to solve a logistic regression problem in the form of
\begin{align*}\label{logreg}
 \underset{{x} \in \mathbb{R}^{d}}{\operatorname{min}}  \frac{\iota}{2}\|{x}\|^{2}+\frac{1}{n}\!\sum_{i=1}^{n} \!\frac{1}{m_i}\! \sum_{j=1}^{m_{i}}  \ln \left(1+\exp \left(\!-\!\left(\mathbf{o}_{i j}^{T} {x}\right) \mathbf{p}_{i j}\right)\right),
\end{align*}
where node $i$ privately owns $m_i$ training samples $(\mathbf{o}_{i l}, \mathbf{p}_{i l}) \in \mathbb{R}^{d} \times\{-1,+1\}$; $l=1, \ldots, m_{i}$.
We use five real datasets\footnote{https://www.csie.ntu.edu.tw/\~{}cjlin/libsvmtools/datasets/}, whose attributes are summarized in Table \ref{table}. We normalize each sample such that $\|\mathbf{o}_{i l}\|=1, \forall i,l$. Note that another way is to normalize each feature, which yields better condition number but is nontrivial to implement in the decentralized setting.  A regularization term $\frac{\iota}{2}\|{x}\|^{2}$ with $\iota>0$ is used to avoid over-fitting.
We set $n=20$, $\iota=0.001$ and $\mathcal{B}=10^4$ throughout the following numerical experiments. The training samples are randomly and evenly distributed over all the $n$ nodes.
\begin{table}
	\centering
	\caption{Datasets used in numerical experiments.}\label{table}
	\begin{tabular}{c|c|c}
		\hline
		\bf Dataset  & \bf \# of Samples 	
		($\sum_{i=1}^{n} m_i$) & \bf \# of Features ($d$)\\\hline 		
		covtype &40000  & 54 \\
		cod-rna &52000  & 8 \\
		a6a & 11220 & 123\\
		a9a & 32560 & 123\\
		ijcnn1 & 91700 & 22\\
		\hline
	\end{tabular}
\end{table}

We compare the proposed two decentralized stochastic quasi-Newton methods with four decentralized stochastic first-order algorithms, DSA in \cite{mokhtari2016dsa}, GT-SVRG and GT-SAGA in  \cite{xin2020variance},  and  Acc-VR-DIGing in \cite{li2020optimal} on four real datasets. Different to the numerical experiments on the synthetic data where the batch sizes are set as 1, now we use larger batch sizes to boost the performance of the four first-order methods.
Figs. \ref{covtype}--\ref{a9a} depict the results on covtype, cod-rna, a6a and a9a, respectively.  The parameters are set as follows.  We set $n=20$ and the connectivity ratio $\varrho=0.5$.  When the dataset is covtype (cod-rna, a6a, a9a),
for the proposed DFP method, we set $\alpha=0.32 $ $(0.3, 0.38,0.38)$, $\rho=0.01$ $ (0.0002, 0.01, 0.001)$, $\epsilon=0.02 $ $ (0.03,0.005, 0.1)$, $\beta=0.002$ $ (0.002, 0.015, 0.5)$,
$\tilde{L}=50$ $(50, 20, 50)$,
the memory size $M=3$ $ (20, 40,  50)$, the batch size ratio $b_i/m_i=10\%$ $ (8\%, 10\%, 6\%)$, respectively.
For the proposed BFGS method, we set $\alpha=0.37 $ $(0.35, 0.38, 0.35)$, $\epsilon=0.001 $ $ (100, 30, 30)$, $\beta=0.002$ $ (0.002, 1.2, 0.5)$, $\tilde{L}=50$ $(50, 20, 20)$, the memory size $M=3$ $ (40, 50, 50)$, the batch size ratio $b_i/m_i=10\%$ $ (10\%, 10\%,  10\%)$, respectively.
For GT-SVRG, we set the stepsize  $\alpha=0.002$ $ (0.01, 0.009, 0.004)$ and the batch size $b_i=5$ $ (2, 1, 2)$. For DSA, we set the stepsize $\alpha=0.001 $ $(0.03, 0.009, 0.008)$ and the batch size $b_i=10 $ $(10, 1, 2)$. For GT-SAGA, we set the stepsize $\alpha=0.002 $ $(0.009, 0.009, 0.0035)$ and the batch size $b_i=5 $ $(2, 1, 1)$. For Acc-VR-DIGing, we set the stepsize $\alpha=0.002 $ $(0.03, 0.04, 0.015)$, the two parameters $\theta_1=0.9 $ $(0.07, 0.1, 0.1)$ and $\theta_2=0.01 $ $(0.05, 0.1, 0.1)$, while the batchsize $b_i=5 $ $(10, 5, 5)$.

As Figs. \ref{covtype}--\ref{a9a} show,  the proposed two decentralized stochastic quasi-Newton methods outperform  DSA, GT-SVRG, GT-SAGA and Acc-VR-DIGing in all the four datasets, demonstrating the gains of curvature information from the constructed Hessian inverse approximations.  Generally, the proposed DFP method is better than the proposed BFGS method in all the four datasets, but the BFGS method has lower memory requirement and lower computation cost.
\begin{figure}
	\centering
	\centerline{\includegraphics[width=6cm]{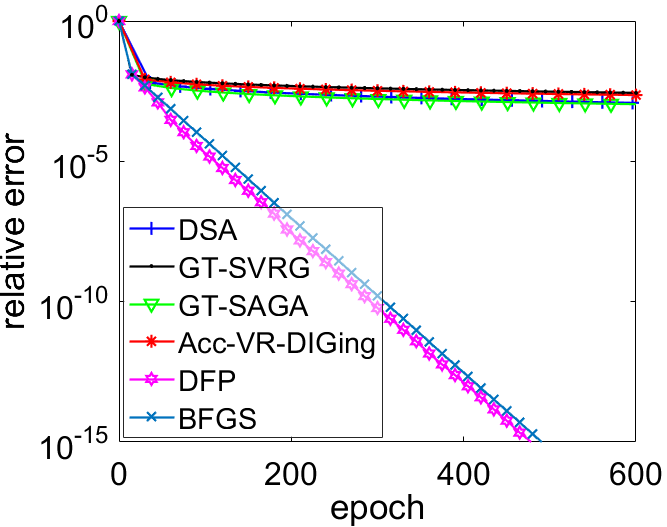}}
	\caption{Comparison with first-order algorithms on covtype.}
	\label{covtype}
\end{figure}
\begin{figure}
	\centering
	\centerline{\includegraphics[width=6cm]{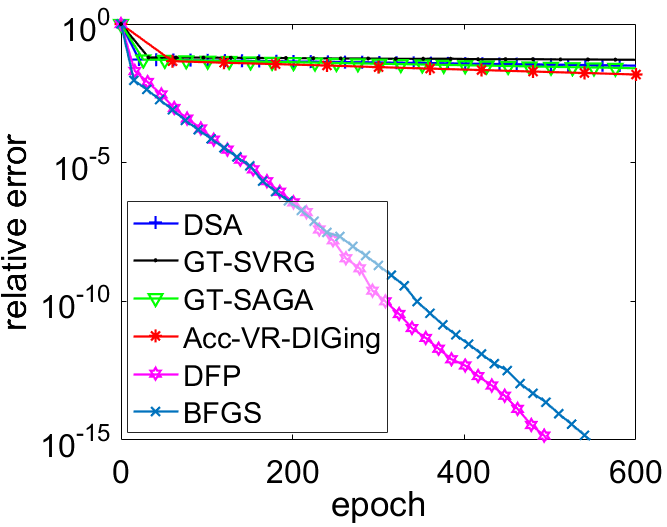}}
	\caption{Comparison with first-order algorithms on cod-rna.}
	\label{cod-rna}
\end{figure}
\begin{figure}
	\centering
	\centerline{\includegraphics[width=6cm]{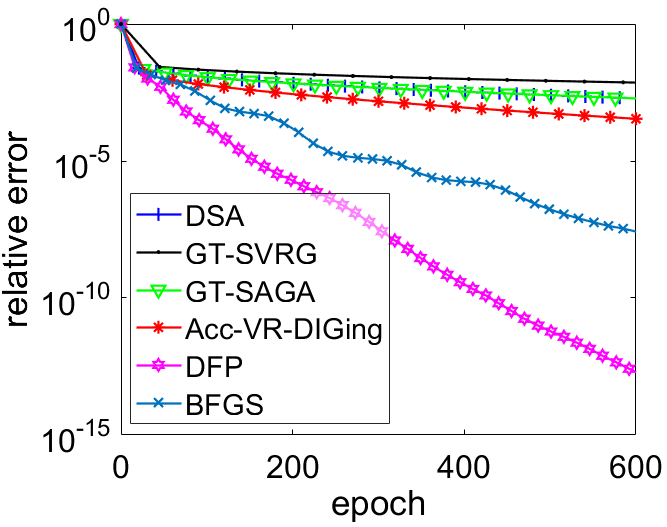}}
	\caption{Comparison with first-order algorithms on a6a.}
	\label{a6a}
\end{figure}
\begin{figure}
	\centering
	\centerline{\includegraphics[width=6cm]{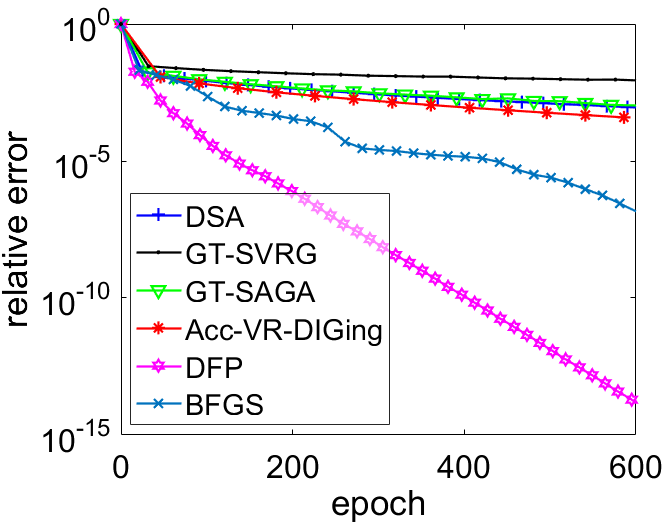}}
	\caption{Comparison with first-order algorithms on a9a.}
	\label{a9a}
\end{figure}
\subsection{Effects of Batch Size}\label{num-batch}
%Here, we numerically show the effects of batch size, memory size and topology on the performance of proposed DFP and BFGS methods using the real dataset ijcnn1.
%
Here, we numerically show the effects of batch size on the performance of proposed DFP and BFGS methods for solving the logistic regression problem using the real dataset ijcnn1.
In Figs. \ref{batch-DFP} and \ref{batch-BFGS}, we evaluate the effects of different batch size ratios ( $b_i/m_i=2\%, 4\%, 6\%, 8\%$ and $10\%$), on the performance of the proposed DFP and BFGS methods, respectively. The parameters are set as follows. For the proposed DFP (BFGS) method, we set $\alpha=0.32 $ $(0.31)$, $\rho=0.005$ $ (0)$, $\epsilon=0.005 $ $ (0.005)$, $\beta=0.1$ $ (0.1)$, and the memory size $M=50$ $ (50)$. The other settings are the same as those used in Fig. \ref{covtype}.

From Figs. \ref{batch-DFP} and \ref{batch-BFGS}, we observe that too larger or smaller batch sizes lead to more epochs, because a smaller batch size causes higher stochastic gradient noise, while a larger batch size calls for more gradient evaluations per iteration. For both of the proposed DFP and BFGS methods, a batch size ratio of $b_i/m_i = 6\%$ gives the best performance.
\begin{figure}
	\centering
	\centerline{\includegraphics[width=6cm]{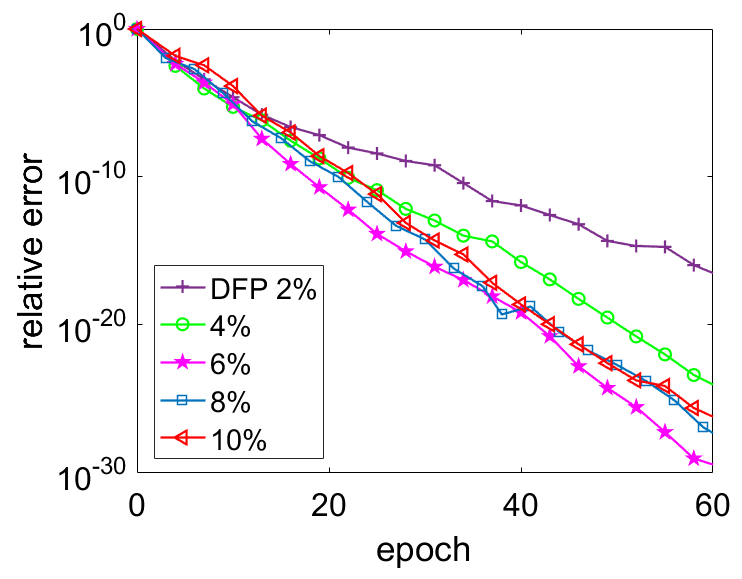}}
	\caption{Effects of batch size of DFP on ijcnn1.}
	\label{batch-DFP}
\end{figure}
\begin{figure}
	\centering
	\centerline{\includegraphics[width=6cm]{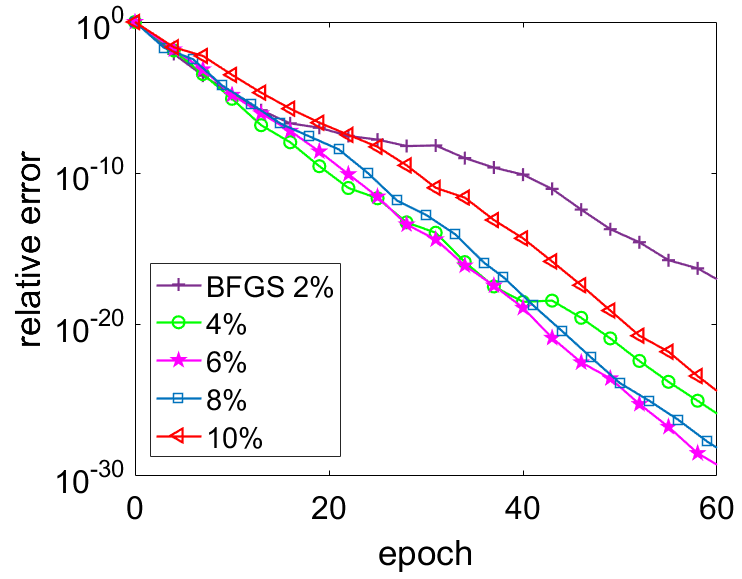}}
	\caption{Effects of batch size of BFGS on ijcnn1.}
	\label{batch-BFGS}
\end{figure}

\subsection{Effects of Memory Size}\label{num-memory}
In Figs. \ref{memory-DFP} and \ref{memory-BFGS}, we evaluate the effects of different memory sizes ( $M=5, 10, 20, 30, 40,$ and $50$) on the performance of the proposed DFP and BFGS methods, respectively. The problem is logistic regression and the dataset is ijcnn1. The parameters are set as follows. For the proposed DFP (BFGS) method, we set $\alpha=0.32 $ $(0.31)$, $\rho=0.004$ $ (0)$, $\epsilon=0.005 $ $ (0.002)$, $\beta=0.001$ $ (0.1)$, and the batch size ratio $b_i/m_i=6\%$ $ (6\%)$. The other settings are the same as those used in Fig. \ref{covtype}.

As  Figs. \ref{memory-DFP} and \ref{memory-BFGS} show, a larger memory size generally leads to faster convergence, but the improvement becomes marginal when the memory size is sufficiently large. Therefore, we can use a moderate memory size, which leads to low memory and computation costs.
\begin{figure}
	\centering
	\centerline{\includegraphics[width=6cm]{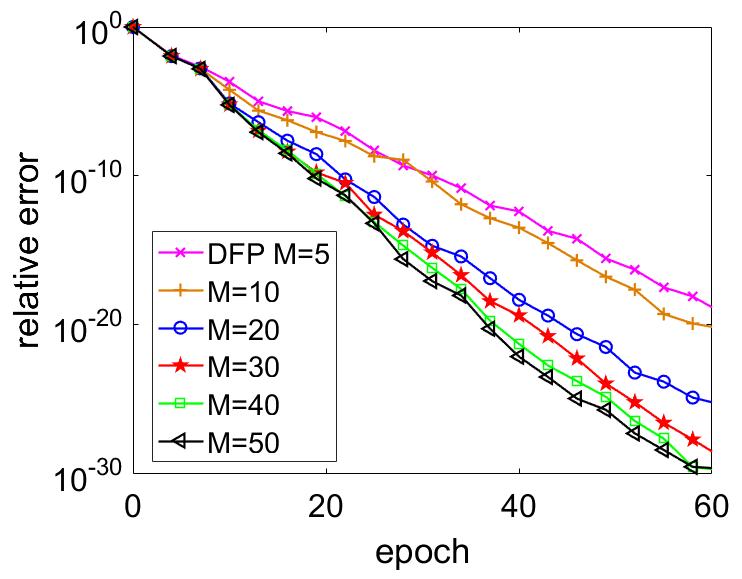}}
	\caption{Effects of memory size of DFP on ijcnn1.}
	\label{memory-DFP}
\end{figure}
\begin{figure}
	\centering
	\centerline{\includegraphics[width=6cm]{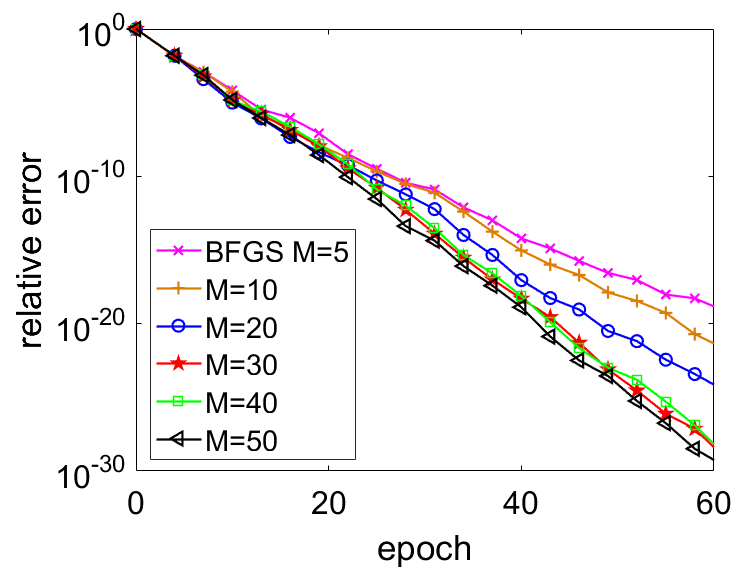}}
	\caption{Effects of memory size of BFGS on ijcnn1.}
	\label{memory-BFGS}
\end{figure}
\subsection{Effects of Topology}\label{num-topo}
In Figs. \ref{topo-DFP} and \ref{topo-BFGS}, we evaluate the effects of five different topologies (cycle,star, random graphs with connectivity ratios $\varrho=0.2$, $0.3$, $0.5$) on the performance of the proposed DFP and BFGS methods, respectively. The second largest singular values $\sigma$ of $W$ i.e.,
$\sigma=\|W-\frac{1}{n}1_n 1_n^T\|_2$, of the five graphs are $\sigma=0.967$, $0.950$, $0.863$, $0.797$, and $0.569$, respectively.
The parameters are set as follows. For the proposed DFP method, we set $\alpha=0.035$ $(0.02, 0.2, 0.25, 0.32)$, $\rho=0.003$ $(0.001, 0.001, 0.001, 0.005)$, $\epsilon=0.005$ $(0.005, 0.005, 0.005, 0.005)$, $\beta=0.1$ $(0.1, 0.1, 0.1, 0.1)$, the memory size $M=50$ $(50, 50, 50, 50)$,  and the batch size ratio $b_i/m_i=6\%$ $(6\%,6\%,6\%,6\%)$.
For the proposed BFGS method, we set $\alpha=0.06$ $(0.07, 0.2, 0.3, 0.31)$, $\epsilon=0.005$ $(0.005, 0.002, 0.002, 0.002)$, $\beta=0.1$ $(0.1, 0.1, 0.1, 0.1)$, $M=50$ $(50, 50, 50, 50)$, and the batch size ratio $b_i/m_i=11\%$ $(10\%,6\%,6\%,6\%)$. The other settings are the same as those used in Fig. \ref{covtype}.

From Figs. \ref{topo-DFP} and \ref{topo-BFGS}, we observe that the proposed two decentralized quasi-Newton methods converge linearly on different graphs. For both methods, graphs with smaller $\sigma$ give faster convergence rates, which corroborate with the theoretical results in Part I.
\begin{figure}
	\centering
	\centerline{\includegraphics[width=6cm]{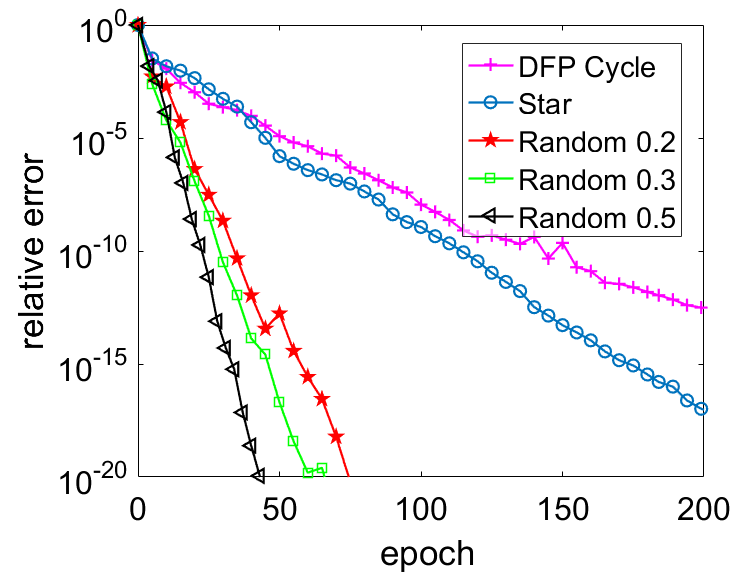}}
	\caption{Effects of topology of DFP on ijcnn1.}
	\label{topo-DFP}
\end{figure}
\begin{figure}
	\centering
	\centerline{\includegraphics[width=6cm]{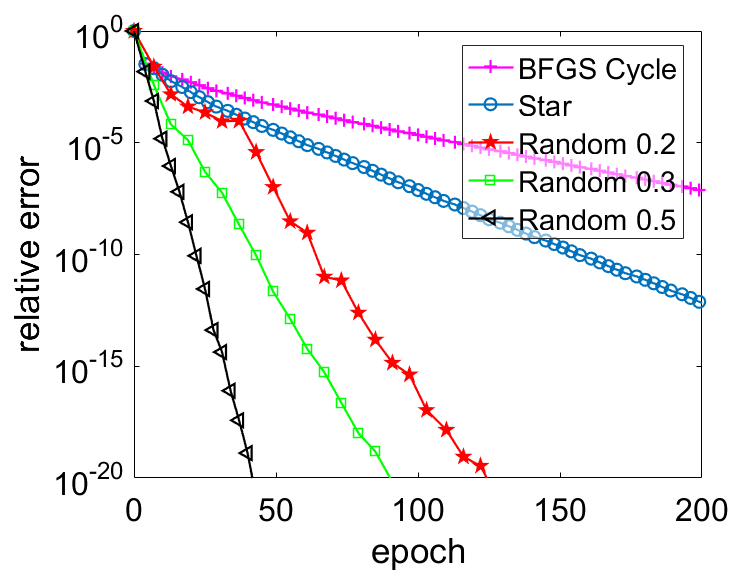}}
	\caption{Effects of topology of BFGS on ijcnn1.}
	\label{topo-BFGS}
\end{figure}
\section{Conclusions}
In Part II of this work, we propose two fully decentralized quasi-Newton methods, damp regularized limited-memory DFP  and damp limited-memory BFGS, to locally construct the Hessian inverse approximations. We use the damping and limited-memory techniques to ensure that the constructed Hessian inverse approximations are positive definite with bounded eigenvalues. For the DFP-based method, we add a regularization term to improve the performance. For the BFGS-based method, we use a two-loop recursion to reduce the memory and computation costs. We prove that quasi-Newton approximations satisfy the assumption in Part I for the exact linear convergence. Numerical experiments in Part II demonstrate that the proposed quasi-Newton methods are much faster than the existing decentralized stochastic first-order methods.

% references section

\bibliographystyle{IEEEtran}
\bibliography{IEEEabrv_PnADMM}

%\begin{thebibliography}{1}
%\bibitem{IEEEhowto:kopka}
%H.~Kopka and P.~W. Daly, \emph{A Guide to \LaTeX}, 3rd~ed.\hskip 1em plus
%  0.5em minus 0.4em\relax Harlow, England: Addison-Wesley, 1999.

%\end{thebibliography}

% biography section
%
% If you have an EPS/PDF photo (graphicx package needed) extra braces are
% needed around the contents of the optional argument to biography to prevent
% the LaTeX parser from getting confused when it sees the complicated
% \includegraphics command within an optional argument. (You could create
% your own custom macro containing the \includegraphics command to make things
% simpler here.)
%\begin{IEEEbiography}[{\includegraphics[width=1in,height=1.25in,clip,keepaspectratio]{mshell}}]{Michael Shell}
% or if you just want to reserve a space for a photo:

%\begin{IEEEbiography}{Michael Shell}
%Biography text here.
%\end{IEEEbiography}

%% if you will not have a photo at all:
%\begin{IEEEbiographynophoto}{John Doe}
%Biography text here.
%\end{IEEEbiographynophoto}

% insert where needed to balance the two columns on the last page with
% biographies
%\newpage

%\begin{IEEEbiographynophoto}{Jane Doe}
%Biography text here.
%\end{IEEEbiographynophoto}

% You can push biographies down or up by placing
% a \vfill before or after them. The appropriate
% use of \vfill depends on what kind of text is
% on the last page and whether or not the columns
% are being equalized.

%\vfill

% Can be used to pull up biographies so that the bottom of the last one
% is flush with the other column.
%\enlargethispage{-5in}

% that's all folks
\end{document}